\theoremstyle{plain}
\newtheorem{thm}{\protect\theoremname}[section]
  \theoremstyle{plain}
  \newtheorem{lem}[thm]{\protect\lemmaname}
  \theoremstyle{definition}
  \newtheorem{defn}[thm]{\protect\definitionname}
  \theoremstyle{plain}
  \newtheorem{prop}[thm]{\protect\propositionname}
  \theoremstyle{definition}
  \newtheorem{example}[thm]{\protect\examplename}
  \theoremstyle{remark}
  \newtheorem{rem}[thm]{\protect\remarkname}
  \theoremstyle{definition}
  \newtheorem{condition}[thm]{\protect\conditionname}
\newenvironment{keywords}{ \noindent\footnotesize\textbf{Keywords and phrases:}}{}
\newenvironment{class}{\noindent\footnotesize\textbf{Mathematics subject classification 2010:}}{}
\newcommand*{\dive}{\operatorname{div}}
\newcommand*{\Grad}{\operatorname{Grad}}
\newcommand*{\Div}{\operatorname{Div}}
\newcommand*{\grad}{\operatorname{grad}}
\renewcommand*{\i}{\mathrm{i}}
\DeclareMathAccent{\Circ}{\mathalpha}{operators}{"17}
\newcommand{\interior}[1]{\Circ{#1}}
\renewcommand{\Im}{\operatorname{\mathfrak{Im}}}
\renewcommand{\Re}{\operatorname{\mathfrak{Re}}}
\newcommand{\oi}[2]{\left]#1,#2 \right[}
\newcommand{\rga}[1]{\left]#1,\infty  \right[}
\newcommand{\lci}[2]{\left[#1,#2 \right[}
\newcommand{\rci}[2]{\left]#1,#2 \right]}
\renewcommand{\tilde}{\widetilde}
\renewcommand*{\epsilon}{\varepsilon}
\renewcommand*{\theta}{\vartheta}
\renewcommand*{\rho}{\varrho}
\author{Rainer Picard, Sascha Trostorff \& Marcus Waurick}
  \providecommand{\conditionname}{Condition}
  \providecommand{\definitionname}{Definition}
  \providecommand{\examplename}{Example}
  \providecommand{\lemmaname}{Lemma}
  \providecommand{\propositionname}{Proposition}
  \providecommand{\remarkname}{Remark}
\providecommand{\theoremname}{Theorem}
\begin{document}
\selectlanguage{english}%
\makepreprinttitlepage

\author{ Rainer Picard, Sascha Trostorff \& Marcus Waurick\\ Institut f\"ur Analysis, Fachrichtung Mathematik\\ Technische Universit\"at Dresden\\ Germany\\ rainer.picard@tu-dresden.de\\ sascha.trostorff@tu-dresden.de\\ marcus.waurick@tu-dresden.de} 

\selectlanguage{american}%

\title{On Evolutionary Equations with Material Laws Containing Fractional
Integrals.}

\maketitle
\begin{abstract} \textbf{Abstract.} A well-posedness result for a
time-shift invariant class of evolutionary operator equations involving
material laws with fractional time-integrals of order $\alpha\in\oi01$
is considered and exemplified by an application to a Kelvin-Voigt
type model.\end{abstract}

\begin{keywords} fractional derivatives, fractional integrals, evolutionary
equations, visco-elasticity, Kelvin-Voigt model, fractional Fokker-Planck
equation, causality \end{keywords}

\begin{class} 35A01 (existence of solutions to PDEs), 35A02 (uniqueness
of solutions to PDEs), 35A22 (transform methods for PDEs), 26A33 (fractional
derivatives and integrals),74C10 (visco-elasticity), 74H99 (dynamical
problems in mechanics of deformable solids) \foreignlanguage{english}{ \end{class}}

\selectlanguage{english}%
\newpage

\tableofcontents{} 

\newpage

\selectlanguage{american}%

\section{Introduction}

Following leading examples of mathematical physics, abstract linear
evolutionary problems of the form 
\begin{equation}
\partial_{0}V+AU=f\,\mbox{ on }\mathbb{R}\label{eq:evo1}
\end{equation}
come into focus. Here $\partial_{0}$ denotes the operator of time-differentiation,
established as a normal, boundedly invertible operator in a suitable
Hilbert space (see \cite{Picard1989}), and we assume that $A$ is
a closed, densely defined operator, such that $A+\lambda_{0}$ and
$A^{*}+\lambda_{0}$ are both maximal accretive for some $\lambda_{0}\in\mathbb{R}$
in some Hilbert space setting. To determine $V$ and $U$ from the
given data $f$ the equation (\ref{eq:evo1}) needs of course to be
completed, here by an additional rule of the form
\begin{equation}
V=\mathcal{M}U\label{eq:material_E}
\end{equation}
frequently referred to as a ``material law''. Here $\mathcal{M}$
is given in terms of an operator-valued function calculus associated
with the (inverse) time-derivative via a bounded-operator-valued and
analytic function $M$. The theory is set up in \cite{PDE_DeGruyter}
for the case $A$ skew-selfadjoint and in \cite{OP2012,Trostorff2012_nonlin_bd}
for the general case resulting in well-posedness of the evolutionary
problem (we omit the closure bar over the operator sum)
\begin{equation}
\left(\partial_{0}M\left(\partial_{0}^{-1}\right)+A\right)U=f\mbox{ on \ensuremath{\mathbb{R}.}}\label{eq:evo_2}
\end{equation}
In order for this to describe a process evolving in time we not just
want the existence of a solution operator $\left(\partial_{0}M\left(\partial_{0}^{-1}\right)+A\right)^{-1}$
but we would also expect a causality property to hold. Causality,
as the property that solutions vanish as long as data are zero, can
be conveniently encoded as
\begin{equation}
\chi_{_{\oi{-\infty}a}}\left(m_{0}\right)\left(\partial_{0}M\left(\partial_{0}^{-1}\right)+A\right)^{-1}=\chi_{_{\oi{-\infty}a}}\left(m_{0}\right)\left(\partial_{0}M\left(\partial_{0}^{-1}\right)+A\right)^{-1}\chi_{_{\oi{-\infty}a}}\left(m_{0}\right)\label{eq:causal}
\end{equation}
for all $a\in\mathbb{R}$. Here $\chi_{_{\oi{-\infty}a}}\left(m_{0}\right)$
denotes the temporal cut-off operator
\[
\left(\chi_{_{\oi{-\infty}a}}\left(m_{0}\right)\varphi\right)\left(t\right)=\chi_{_{\oi{-\infty}a}}\left(t\right)\:\varphi\left(t\right)\mbox{ for }t\in\mathbb{R}.
\]
The symbol $m_{0}$ serves as a reminder for 'multiplication-by-argument'
with respect to the time parameter. Since the systems considered are
time-shift invariant it would suffice to require (\ref{eq:causal})
just for one $a\in\mathbb{R}$ -- say $a=0$. 

In this approach well-posedness is warranted by the eventual (real)
strict positive definiteness of $\partial_{0}M\left(\partial_{0}^{-1}\right)$
for all sufficiently large $\rho\in\oi0\infty$. In this paper we
want to consider material laws of the specific form
\begin{equation}
M\left(\partial_{0}^{-1}\right)=M_{0}+\sum_{\alpha\in\Pi}\partial_{0}^{-\alpha}M_{\alpha}+\partial_{0}^{-1}M_{1}+\partial_{0}^{-1}M_{2}\left(\partial_{0}^{-1}\right),\label{eq:frac-law}
\end{equation}
where $\Pi\subseteq\oi01\mbox{ finite, }$$M_{\alpha}\mbox{ continuous and selfadjoint for }\alpha\in\Pi.$
The terms $\partial_{0}^{-\alpha}$ are the fractional integrals of
the title, which can be defined in terms of the said function calculus.
Such material laws have been of interest in many applications for
a long time and although the strict positive definiteness requirement
is exhaustive, it is not clear what manageable conditions on the coefficient
operators $M_{0}$, $M_{\alpha}$, $\alpha\in\Pi$, would result in
a corresponding positivity estimate, see (\ref{eq:solution-condition})
below. It is the purpose of this paper to give such conditions. For
this we first need to establish the said function calculus associated
with the time derivative $\partial_{0}$ allowing in particular for
a proper definition of the fractional integral $\partial_{0}^{-\alpha}$,
$\alpha\in\oi01$. This is done in the Section \ref{sec:Fractional-Calculus-and}.
Although there exists a vast literature on the topic of fractional
derivatives (see for example the monographs \cite{citeulike:1428674,0789.26002}
and the references therein), the article is largely self-contained,
so that there is no much need to refer to classical statements on
fractional derivatives to understand the results in this work. To
the best of the authors' knowledge it has not been widely noted that
$\partial_{0}$ can be established as a normal operator, see \cite{Picard1989},
which then comes with its own standard function calculus via the spectral
theorem for such operators. Indeed, it turns out that a unitary variant
of the Fourier-Laplace transformation yields a spectral representation
for $\Im\partial_{0}=\frac{1}{2\i}\overline{\left(\partial_{0}-\partial_{0}^{*}\right)}$
and $\Re\partial_{0}$ is just multiplication by a number. This simplifies
matters considerably and since we are staying in a Hilbert space setting
there is indeed no need to utilize more intricate results of fractional
calculus in other spaces. This approach was already successfully used
to study homogenization in fractional elasticity, \cite{waurick2013_fract_elast}.
After establishing the function calculus we shall characterize in
Section \ref{sec:A-Class-of} a class of material laws for which (\ref{eq:solution-condition})
can be shown to hold. This is the main result of the paper. Compared
to the results in \cite[Theorems 2.1 and 2.2]{waurick2013_fract_elast}
and \cite{PicardPamm} we will find that our more in-depth study here
implies less restrictive assumptions on the class of material laws.
The final Section \ref{sec:Some-Applications} is devoted to illustrating
the general results with applications to a fractional Fokker-Planck
equation (Section \ref{sub:Fractional-Reaction-Diffusion-Eq}) and
a fractional Kelvin-Voigt type model from solid mechanics (Section
\ref{sec:An-Application-to}). Section \ref{sec:Some-Applications}
is concluded with some remarks of how to deal with initial value problems
(Section \ref{sub:Initial-Boundary-Value}).

\section{Functional analytic framework}

\subsection{\label{sec:Fractional-Calculus-and}Fractional Calculus and Operator-Valued
Functions of Time Differentiation}

As indicated in the introduction, we start by establishing time differentiation
$\partial_{0}$ as a normal operator. We consider the weighted $H$-valued
$L^{2}$-type space $H_{\rho,0}\left(\mathbb{R},H\right)$, generated
by completion of $\interior C_{\infty}\left(\mathbb{R},H\right)$,
the space of smooth $H$-valued functions with compact support, with
respect to the inner product $\left\langle \:\cdot\:|\:\cdot\:\right\rangle _{\rho,0,0}$
given by the weighted integral $\left(\varphi,\psi\right)\mapsto\int_{\mathbb{R}}\left\langle \varphi\left(t\right)|\psi\left(t\right)\right\rangle _{H}\:\exp\left(-2\rho t\right)dt.$
The associated norm will be denoted by $\left|\:\cdot\:\right|_{\rho,0,0}$.
The multiplication operator $\interior C_{\infty}\left(\mathbb{R},H\right)\subseteq H_{\rho,0}\left(\mathbb{R},H\right)\to\interior C_{\infty}\left(\mathbb{R},H\right)\subseteq H_{0,0}\left(\mathbb{R},H\right)=L^{2}\left(\mathbb{R},H\right),\:\varphi\mapsto\left(t\mapsto\exp\left(-\rho t\right)\varphi\left(t\right)\right)$,
clearly has a unitary continuous extension, which we shall denote
by $\exp\left(-\rho m_{0}\right)$. Its inverse (adjoint) is given
by $\exp\left(\rho m_{0}\right)$. By taking the closure of the operator
\begin{align*}
\interior C_{\infty}(\mathbb{R},H)\subseteq H_{\rho,0}(\mathbb{R},H) & \to H_{\rho,0}(\mathbb{R},H)\\
\phi & \mapsto\phi',
\end{align*}
the time-derivative $\partial_{0}$ can be established as a normal
operator on $H_{\rho,0}\left(\mathbb{R},H\right)$ with real part
$\rho$ and so with $\frac{1}{\i}\left(\partial_{0}-\rho\right)$
as imaginary part. The domain of $\partial_{0}$ can be characterized
by functions belonging to $H_{\rho,0}(\mathbb{R},H),$ whose weak
derivatives also lie in $H_{\rho,0}(\mathbb{R},H).$ For $\rho\in\mathbb{R}\setminus\left\{ 0\right\} $
we have the bounded invertibility of $\partial_{0}$. The inverse
of the normal operator $\partial_{0}$ is bounded and can be described
by
\begin{align*}
\left(\partial_{0}^{-1}f\right)\left(t\right) & =\begin{cases}
\int_{-\infty}^{t}f\left(s\right)\, ds & \mbox{ if }\rho>0,\\
-\intop_{t}^{\infty}f(s)\, ds & \mbox{ if }\rho<0
\end{cases}
\end{align*}
for every $f\in H_{\rho,0}\left(\mathbb{R},H\right)$ and almost every
$t\in\mathbb{R}$ as a Bochner integral. Henceforth we shall focus
on the case $\rho\in\oi0\infty$, which is associated with (forward)
causality, see e.g.~equation (\ref{eq:causal}). The Fourier-Laplace
transform $\mathcal{L}_{\rho}\;:=\mathcal{F}\,\exp\left(-\rho m_{0}\right):H_{\rho,0}(\mathbb{R},H)\to L^{2}(\mathbb{R},H)$,
given as a composition of the (temporal) Fourier transform $\mathcal{F}$
and the unitary weight operator $\exp\left(-\rho m_{0}\right)$, is
a spectral representation associated with $\partial_{0}$. It is 
\[
\partial_{0}=\mathcal{L}_{\rho}^{*}\left(\mathrm{i}\, m_{0}+\rho\right)\:\mathcal{L}_{\rho},
\]
where $m_{0}$ denotes the multiplication-by-argument operator given
as the closure of
\begin{eqnarray*}
\interior C_{\infty}\left(\mathbb{R},H\right)\subseteq L^{2}\left(\mathbb{R},H\right) & \to & L^{2}\left(\mathbb{R},H\right)\\
\varphi & \mapsto & m_{0}\varphi
\end{eqnarray*}
with
\[
\left(m_{0}\varphi\right)\left(\lambda\right)\::=\lambda\:\varphi\left(\lambda\right)\:\mbox{in }\, H
\]
for every $\lambda\in\mathbb{R}$. This observation allows us to consistently
define an operator function calculus associated with $\partial_{0}$
in a standard way, \cite{Dunford_Schwartz1988}. Clearly, we can even
extend this calculus to operator-valued functions by letting 
\[
M\left(\partial_{0}^{-1}\right)\;\coloneqq\mathcal{L}_{\rho}^{*}M\left(\frac{1}{\i\: m_{0}+\rho}\right)\mathcal{L}_{\rho}.
\]
Here the linear operator $M\left(\frac{1}{\i\: m_{0}+\rho}\right):L^{2}\left(\mathbb{R},H\right)\to L^{2}\left(\mathbb{R},H\right)$
is  determined uniquely via
\[
\left(M\left(\frac{1}{\i\: m_{0}+\rho}\right)\varphi\right)\left(\lambda\right)\:\coloneqq M\left(\frac{1}{\i\:\lambda+\rho}\right)\,\varphi\left(\lambda\right)\:\mbox{ in }H
\]
for every $\lambda\in\mathbb{R}$,~ $\varphi\in\interior C_{\infty}\left(\mathbb{R},H\right)$
by an operator-valued function $M$. For a material law the operator-valued
function $M$ needs to be bounded and an analytic function $z\mapsto M\left(z\right)$
in an open ball $B_{\mathbb{C}}\left(r,r\right)$ with some positive
radius $r$ centered at $r$. This is not an artificial assumption,
rather a necessary constraint enforced by the requirement of causality,
see \cite{PDE_DeGruyter} or \cite[Theorem 9.1]{Thomas1997}. 

In terms of the associated operator-valued function calculus we also
know what
\[
\partial_{0}^{-\alpha},\:\alpha\in[0,1[,
\]
means%
\footnote{It should be noted that $\partial_{0}^{-\alpha}$ is largely independent
of the particular choice of $\rho\in\rga0$. Indeed, since 
\[
\mathcal{L}_{\rho}\chi_{_{\rga0}}\left(m_{0}\right)m_{0}^{\alpha-1}=\frac{1}{\sqrt{2\pi}}\frac{\Gamma\left(\alpha\right)}{\left(\i m+\rho\right)^{\alpha}}
\]
we have for $\varphi\in\interior C_{\infty}\left(\mathbb{R},H\right)$
\[
\frac{1}{\Gamma\left(\alpha\right)}\chi_{_{\rga0}}\left(m_{0}\right)m_{0}^{\alpha-1}*\varphi=\partial_{0}^{-\alpha}\varphi
\]
and
\[
\left(\frac{1}{\Gamma\left(\alpha\right)}\chi_{_{\rga0}}\left(m_{0}\right)m_{0}^{\alpha-1}*\varphi\right)\left(t\right)=\frac{1}{\Gamma\left(\alpha\right)}\int_{-\infty}^{t}\frac{1}{\left(t-s\right)^{1-\alpha}}\varphi\left(s\right)\: ds.
\]
 From this convolution integral representation we can also read off
that $\partial_{0}^{-\alpha}$ is causal.%
}. With this we define for $\gamma\in\mathbb{R}$
\begin{equation}
\partial_{0}^{\gamma}\coloneqq\partial_{0}^{\left\lceil \gamma\right\rceil }\partial_{0}^{\gamma-\left\lceil \gamma\right\rceil }\label{frac-dif}
\end{equation}
as a natural generalization of differentiation to arbitrary real orders.
Here $\left\lceil \alpha\right\rceil $ denotes the smallest integer
greater or equal to $\alpha$. The fact that $\left(\partial_{0}^{\gamma}\right)_{\gamma\in\mathbb{R}}$
is a family of commuting operators appears to be useful in applications,
see \cite{doi:10.1142/S0218396X03002024} and the quoted literature.
In contrast, the variant%
\footnote{In the limit $a\to-\infty$ the spectral fractional derivative is
formally recovered:
\[
\partial_{0}^{\gamma}={}_{-\infty}D_{t}^{\gamma}.
\]
} 
\[
_{a}D_{t}^{\gamma}\coloneqq\partial_{0}^{\gamma}\chi_{_{\oi{a}\infty}}\left(m_{0}\right)=\partial_{0}^{\left\lceil \gamma\right\rceil }\left(\partial_{0}^{\gamma-\left\lceil \gamma\right\rceil }\chi_{_{\oi{a}\infty}}\left(m_{0}\right)\right)
\]
with an appropriate choice of domain, known as Riemann-Liouville fractional
derivative, $a\in\mathbb{R}$ a parameter, \cite{citeulike:1428674,0789.26002},
lacks this property. This is also true for the frequently used alternative
fractional derivative, the Caputo fractional derivative, \cite{citeulike:1428674,0789.26002}:
\[
_{a}^{C}D_{t}^{\gamma}\coloneqq\partial_{0}^{\gamma-\left\lceil \gamma\right\rceil }\chi_{_{\oi{a}\infty}}\left(m_{0}\right)\partial_{0}^{\left\lceil \gamma\right\rceil }
\]
with a suitable domain. With these fractional derivatives being mere
variants of limited usefulness in our context, we shall utilize only
the above spectral definition (\ref{frac-dif}) for our fractional
differentiation/integration. 

Let us inspect more closely some properties of $\partial_{0}^{\alpha}$
for $\alpha\in\oi01$. Denoting by $\left\Vert \,\,\cdot\:\right\Vert _{X}$
the operator norm for operators in the space $X$ we record our first
lemma.
\begin{lem}
\label{lem-frac-est-1}For $\alpha\in\left[0,1\right]$ we have
\begin{equation}
\Re\partial_{0}^{\alpha}\geq\rho^{\alpha}\label{eq:alphalow}
\end{equation}
and for $\alpha\in\lci0\infty$
\begin{equation}
\left\Vert \partial_{0}^{-\alpha}\right\Vert _{H_{\rho,0}\left(\mathbb{R},H\right)}\leq\rho^{-\alpha}.\label{eq:alphahigh}
\end{equation}
\end{lem}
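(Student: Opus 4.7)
The plan is to reduce both inequalities to pointwise statements on the spectral side via the unitary Fourier--Laplace transform $\mathcal{L}_{\rho}$. Since $\partial_{0}=\mathcal{L}_{\rho}^{*}(\i m_{0}+\rho)\mathcal{L}_{\rho}$, the operator-valued function calculus established above yields $\partial_{0}^{\alpha}=\mathcal{L}_{\rho}^{*}(\i m_{0}+\rho)^{\alpha}\mathcal{L}_{\rho}$, where $(\i\lambda+\rho)^{\alpha}$ is defined via the principal branch on the right half-plane. Both assertions of the lemma therefore translate into bounds on the multiplier $\lambda\mapsto(\i\lambda+\rho)^{\alpha}$, and the unitarity of $\mathcal{L}_{\rho}$ transfers these pointwise bounds to operator bounds on $H_{\rho,0}(\mathbb{R},H)$.

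I would dispatch (\ref{eq:alphahigh}) first since it is the easier one. By the calculus and Plancherel,
\[
\left\Vert \partial_{0}^{-\alpha}\right\Vert _{H_{\rho,0}(\mathbb{R},H)}=\sup_{\lambda\in\mathbb{R}}\left|(\i\lambda+\rho)^{-\alpha}\right|=\left(\inf_{\lambda\in\mathbb{R}}\sqrt{\lambda^{2}+\rho^{2}}\right)^{-\alpha}=\rho^{-\alpha},
\]
which gives the claim for all $\alpha\geq0$.

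For (\ref{eq:alphalow}), the cases $\alpha=0$ and $\alpha=1$ are immediate from the fact that $\Re\partial_{0}=\rho$, so the content is in $\alpha\in\oi01$. Writing $\i\lambda+\rho=r\e^{\i\theta}$ with $r=\sqrt{\lambda^{2}+\rho^{2}}$ and $\theta=\arctan(\lambda/\rho)\in\oi{-\pi/2}{\pi/2}$, one has $r=\rho/\cos\theta$, so
\[
\Re(\i\lambda+\rho)^{\alpha}=r^{\alpha}\cos(\alpha\theta)=\rho^{\alpha}\,\frac{\cos(\alpha\theta)}{(\cos\theta)^{\alpha}}.
\]
Thus the estimate $\Re\partial_{0}^{\alpha}\geq\rho^{\alpha}$ reduces to the scalar inequality
\[
\cos(\alpha\theta)\geq(\cos\theta)^{\alpha}\qquad(\alpha\in[0,1],\;|\theta|<\pi/2).
\]

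The scalar inequality is the only non-trivial point, and the cleanest route is concavity: the map $\alpha\mapsto\log\cos(\alpha\theta)$ has second derivative $-\theta^{2}\sec^{2}(\alpha\theta)\leq0$ on $[0,1]$, hence is concave. Since it vanishes at $\alpha=0$ and equals $\log\cos\theta$ at $\alpha=1$, concavity yields $\log\cos(\alpha\theta)\geq\alpha\log\cos\theta$ for $\alpha\in[0,1]$, which is the desired inequality. Feeding this back into the spectral formula above finishes the proof. I do not foresee any genuine obstacles; the only subtlety is to keep track that $\cos\theta>0$ on the open half plane so that the logarithm is well defined, and that the spectral calculus legitimately identifies the real part of the operator with the pointwise real part of its symbol.
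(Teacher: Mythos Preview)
Your proposal is correct, and both parts take a different route from the paper.

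For (\ref{eq:alphahigh}) the paper does not compute the multiplier norm directly; instead it first proves (\ref{eq:alphalow}), uses Cauchy--Schwarz to obtain $\left|\partial_{0}^{\alpha}U\right|_{\rho,0,0}\geq\rho^{\alpha}\left|U\right|_{\rho,0,0}$ for $\alpha\in[0,1]$, and then extends to $\alpha\geq1$ by writing $\partial_{0}^{-\alpha}=\partial_{0}^{-\lfloor\alpha\rfloor}\partial_{0}^{\lfloor\alpha\rfloor-\alpha}$ and invoking submultiplicativity together with $\|\partial_{0}^{-1}\|\leq\rho^{-1}$. Your direct computation of the symbol norm is shorter, independent of (\ref{eq:alphalow}), and in fact yields equality rather than merely the upper bound.

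For (\ref{eq:alphalow}) both arguments arrive at the same pointwise inequality $\cos(\alpha\theta)\geq(\cos\theta)^{\alpha}$, but the paper fixes $\alpha$ and minimizes $\beta\mapsto\cos(\alpha\beta)/(\cos\beta)^{\alpha}$ over $\beta\in\oi{-\pi/2}{\pi/2}$: it differentiates, simplifies to $-\alpha\sin((\alpha-1)\beta)/(\cos\beta)^{\alpha+1}$, observes the unique critical point at $\beta=0$ with value $1$, and uses the behavior at the endpoints to conclude this is the minimum. You instead fix $\theta$ and exploit concavity of $\alpha\mapsto\log\cos(\alpha\theta)$ to interpolate between the endpoint values; this is a neat alternative that avoids the case distinction on the boundary behavior. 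Either optimization variable works, and neither argument yields more than the other here.
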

\begin{proof}
To see (\ref{eq:alphalow}) it suffices -- by the function calculus
of $\partial_{0}$ -- to consider $\Re\left(\i t+\rho\right)^{\alpha}$
for $t\in\mathbb{R}.$ We have
\begin{align}
\Re\left(\left(\i t+\rho\right)^{\alpha}\right) & =\left|\i t+\rho\right|^{\alpha}\cos\left(\alpha\beta\right)\nonumber \\
 & =\rho^{\alpha}\left(1+\left(\tan\beta\right)^{2}\right)^{\alpha/2}\cos\left(\alpha\beta\right)\nonumber \\
 & =\rho^{\alpha}\frac{\cos\left(\alpha\beta\right)}{\left(\cos\beta\right)^{\alpha}}\label{eq:real_part_frac}
\end{align}
with
\[
\beta\coloneqq\arg\left(\i t+\rho\right)\in]-\pi/2,\pi/2[\,.
\]
The values of $\frac{\cos\left(\alpha\beta\right)}{\left(\cos\beta\right)^{\alpha}}$
are either unbounded%
\footnote{\label{fn:incomparable}This fact makes the cases $\alpha\in\left\{ 0,1\right\} $
and $\alpha\in\oi01$ incomparable.%
} as $\beta\to\pm\frac{\pi}{2}$ ($\alpha\in]0,1[$), or $1$ (if $\alpha\in\{0,1\}$).
In the first case we shall try to find a minimum. The derivative
\begin{align*}
\left(\beta\mapsto\frac{\cos\left(\alpha\beta\right)}{\left(\cos\beta\right)^{\alpha}}\right)^{\prime} & =\left(\beta\mapsto-\frac{\alpha\left(\cos\beta\right)^{\alpha}\sin\left(\alpha\beta\right)-\cos\left(\alpha\beta\right)\alpha\left(\cos\left(\beta\right)\right)^{\alpha-1}\sin\left(\beta\right)}{\left(\cos\beta\right)^{2\alpha}}\right)\\
 & =\left(\beta\mapsto-\alpha\frac{\cos\beta\sin\left(\alpha\beta\right)-\cos\left(\alpha\beta\right)\sin\left(\beta\right)}{\left(\cos\beta\right)^{\alpha+1}}\right)\\
 & =\left(\beta\mapsto-\alpha\frac{\sin\left(\left(\alpha-1\right)\beta\right)}{\left(\cos\beta\right)^{\alpha+1}}\right)
\end{align*}
vanishes only for $\beta=0.$ Due to the behavior at the limits this
must be the point where a minimum occurs. For $\beta=0$ we have,
however, 
\[
\frac{\cos\left(\alpha\beta\right)}{\left(\cos\beta\right)^{\alpha}}=1,
\]
which proves (\ref{eq:alphalow}). Estimate (\ref{eq:alphahigh})
now follows for $\alpha\in\left[0,1\right]$ since
\[
\left|U\right|_{\rho,0,0}\left|\partial_{0}^{\alpha}U\right|_{\rho,0,0}\geq\Re\left\langle U|\partial_{0}^{\alpha}U\right\rangle _{\rho,0,0}\geq\rho^{\alpha}\left\langle U|U\right\rangle _{\rho,0,0}
\]
 and so
\[
\left|\partial_{0}^{\alpha}U\right|_{\rho,0,0}\geq\rho^{\alpha}\left|U\right|_{\rho,0,0}
\]
for all $U\in D\left(\partial_{0}^{\alpha}\right)$, which implies
(\ref{eq:alphahigh}) for $\alpha\in\left[0,1\right]$. Since
\[
\partial_{0}^{-\alpha}=\partial_{0}^{-\left\lfloor \alpha\right\rfloor }\partial_{0}^{\left\lfloor \alpha\right\rfloor -\alpha}
\]
with $\left\lfloor \alpha\right\rfloor $ denoting the largest integer
less or equal to $\alpha$, (\ref{eq:alphahigh}) follows for arbitrary
$\alpha\in\lci0\infty$ by the submultiplicativity of the operator
norm and the fact that $\|\partial_{0}^{-1}\|\leq\rho^{-1}$. 
\end{proof}
\noindent For our purposes we mostly need to be concerned with $\alpha\in\oi01$
leading to 
\[
\partial_{0}^{\alpha}=\partial_{0}\partial_{0}^{\alpha-1},\:\alpha\in\oi01.
\]

\begin{lem}
\label{lem-frac-mono} Let $I$ be a compact interval in $]0,1[$.
Then the function
\[
I\ni\alpha\mapsto\Re\left(\partial_{0}^{\alpha}\right),
\]
with unbounded selfadjoint operators $\Re\left(\partial_{0}^{\alpha}\right)$
in $H_{\rho,0}\left(\mathbb{R},H\right)$ as values, is \emph{monotonically
increasing }in the sense that there exists $\rho_{0}>0$ such that
for all $\rho>\rho_{0}$ and for all $\alpha\leq\beta$ in $I$ the
estimate
\[
\left\langle U|\Re\left(\partial_{0}^{\alpha}\right)U\right\rangle _{\rho,0,0}\leq\left\langle U|\Re\left(\partial_{0}^{\beta}\right)U\right\rangle _{\rho,0,0}
\]
holds for all $U\in D\left(\partial_{0}^{\alpha}\right)$.\end{lem}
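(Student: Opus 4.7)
The plan is to reduce the statement to a pointwise inequality via the spectral representation of $\partial_0$ and then establish that inequality by an elementary analysis of the function $\alpha \mapsto \Re((\mathrm{i}t+\rho)^\alpha)$ on the compact interval $I$.

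First I would invoke the function calculus: by the representation $\partial_{0}=\mathcal{L}_{\rho}^{*}(\mathrm{i}\,m_{0}+\rho)\mathcal{L}_{\rho}$ established above, $\Re(\partial_{0}^{\alpha})$ is unitarily equivalent to the multiplication operator by $t\mapsto \Re((\mathrm{i}t+\rho)^{\alpha})$ on $L^{2}(\mathbb{R},H)$. Hence the desired operator inequality $\langle U|\Re(\partial_0^\alpha)U\rangle_{\rho,0,0}\leq \langle U|\Re(\partial_0^\beta)U\rangle_{\rho,0,0}$ follows (in the sense of quadratic forms, with both sides interpreted in $[0,\infty]$, which covers the case $U\in D(\partial_0^\alpha)\setminus D(\partial_0^\beta)$) from the pointwise scalar inequality
\[
\Re\bigl((\mathrm{i}t+\rho)^{\alpha}\bigr)\leq \Re\bigl((\mathrm{i}t+\rho)^{\beta}\bigr)\qquad (t\in\mathbb{R},\ \alpha\leq\beta\text{ in }I).
\]

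Next I would analyse the scalar function $g(\alpha):=\Re((\mathrm{i}t+\rho)^{\alpha}) = |\mathrm{i}t+\rho|^{\alpha}\cos(\alpha\theta)$, where $\theta:=\arg(\mathrm{i}t+\rho)\in\oi{-\pi/2}{\pi/2}$, using the same parametrisation as in the proof of Lemma \ref{lem-frac-est-1} (I use $\theta$ rather than $\beta$ to avoid clashing with the right endpoint of $I$). Differentiating in $\alpha$ yields
\[
g'(\alpha) = |\mathrm{i}t+\rho|^{\alpha}\bigl[\ln|\mathrm{i}t+\rho|\,\cos(\alpha\theta) - \theta\sin(\alpha\theta)\bigr].
\]
Since $I\subseteq\oi{0}{1}$ is compact there is some $\alpha_{\max}<1$ with $|\alpha\theta|\leq\alpha_{\max}|\theta|<\alpha_{\max}\pi/2<\pi/2$ for all $\alpha\in I$ and all $\theta\in\oi{-\pi/2}{\pi/2}$. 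Thus $\cos(\alpha\theta)>0$, and $g'(\alpha)\geq 0$ is equivalent to $\ln|\mathrm{i}t+\rho|\geq \theta\tan(\alpha\theta)$.

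The key observation is then that the right-hand side is uniformly bounded: $\theta\tan(\alpha\theta)$ is non-negative (same sign of both factors) and bounded above by $(\pi/2)\tan(\alpha_{\max}\pi/2)=:C_{I}<\infty$, uniformly for $\alpha\in I$ and $\theta\in\oi{-\pi/2}{\pi/2}$. Since $|\mathrm{i}t+\rho|\geq\rho$, the choice $\rho_{0}:=\mathrm{e}^{C_{I}}$ gives $\ln|\mathrm{i}t+\rho|\geq \ln\rho\geq C_{I}\geq\theta\tan(\alpha\theta)$ for every $\rho>\rho_{0}$. Consequently $g'\geq 0$ on $I$, yielding the desired pointwise monotonicity and, via the function calculus, the asserted operator-theoretic monotonicity.

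The main obstacle, and in fact the whole point of restricting $I$ to be compactly contained in $\oi{0}{1}$, is precisely the uniform boundedness of $\theta\tan(\alpha\theta)$: without the bound $\alpha_{\max}<1$ this quantity blows up as $|\theta|\to\pi/2$, and no single $\rho_{0}$ would work. The extreme cases $\alpha\in\{0,1\}$ were already flagged as incomparable in footnote \ref{fn:incomparable}, so the restriction to a compact subinterval is essential rather than technical.
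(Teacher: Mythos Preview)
Your argument is correct and follows essentially the same route as the paper's proof: both reduce via the Fourier--Laplace transform to the scalar function $\alpha\mapsto \Re((\mathrm{i}t+\rho)^{\alpha})$, differentiate in $\alpha$, bound $\theta\tan(\alpha\theta)\leq\frac{\pi}{2}\tan(\alpha_{\max}\frac{\pi}{2})$ using the compactness of $I$, and choose the identical threshold $\rho_{0}=\exp\bigl(\frac{\pi}{2}\tan(\max I\cdot\frac{\pi}{2})\bigr)$. The only cosmetic differences are your use of $|\mathrm{i}t+\rho|$ in place of the paper's equivalent $\rho/\cos\theta$, and your (arguably cleaner) handling of the domain question via extended quadratic forms rather than a domain inclusion.
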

\begin{proof}
We define 
\[
\rho_{0}\coloneqq\exp\left(\frac{\pi}{2}\tan\left(\max I\:\frac{\pi}{2}\right)\right)>0.
\]
Let $t\in\mathbb{R}$ be fixed and $\rho>\rho_{0}$. We recall from
(\ref{eq:real_part_frac}) that for $\alpha\in[0,1]$ 
\begin{align*}
\Re\left(\left(\i t+\rho\right)^{\alpha}\right) & =\rho^{\alpha}\frac{\cos\left(\alpha\beta\right)}{\left(\cos\beta\right)^{\alpha}}
\end{align*}
with
\[
\beta\coloneqq\arg\left(\i t+\rho\right)\in]-\pi/2,\pi/2[\,.
\]
Consider now
\[
\left(\alpha\mapsto\rho^{\alpha}\frac{\cos\left(\alpha\beta\right)}{\left(\cos\beta\right)^{\alpha}}\right)^{\prime}=\left(\alpha\mapsto\left(\frac{\rho}{\cos\beta}\right)^{\alpha}\cos\left(\alpha\beta\right)\left(\ln\frac{\rho}{\cos\beta}\:-\beta\tan\left(\alpha\beta\right)\right)\right).
\]
Noting that
\begin{align*}
\ln\frac{\rho}{\cos\beta}\:-\beta\tan\left(\alpha\beta\right) & \geq\ln\rho\:-\frac{\pi}{2}\tan\left(\alpha\frac{\pi}{2}\right)\\
 & \geq\ln\rho\:-\frac{\pi}{2}\tan\left(\max I\,\frac{\pi}{2}\right)>0
\end{align*}
for all $\alpha\in I$ (note $0<\max I<1$) we see that for $\alpha\leq\beta$
in $I$ 
\begin{equation}
\Re\left(\left(\i t+\rho\right)^{\alpha}\right)\leq\Re\left(\left(\i t+\rho\right)^{\beta}\right)\label{eq:monot}
\end{equation}
 for all $t\in\mathbb{R}$. Via the function calculus of $\partial_{0}$
this yields the (dense) inclusion 
\[
D\left(\partial_{0}^{\alpha}\right)\subseteq D\left(\partial_{0}^{\beta}\right).
\]
For $U\in D\left(\partial_{0}^{\alpha}\right)$ we also see from (\ref{eq:monot})
that
\begin{align*}
\left\langle U|\Re\left(\partial_{0}^{\alpha}\right)U\right\rangle _{\rho,0,0} & =\left\langle \mathcal{L}_{\rho}U|\Re\left(\left(\i m_{0}+\rho\right)^{\alpha}\right)\mathcal{L}_{\rho}U\right\rangle _{0,0,0}\\
 & \leq\left\langle \mathcal{L}_{\rho}U|\Re\left(\left(\i m_{0}+\rho\right)^{\beta}\right)\mathcal{L}_{\rho}U\right\rangle _{0,0,0}=\left\langle U|\Re\left(\partial_{0}^{\beta}\right)U\right\rangle _{\rho,0,0}.\tag*{\qedhere}
\end{align*}

\end{proof}

\subsection{Sobolev-chains\label{sub:Sobolev-chains}}

In this subsection we recall the notion of Sobolev chains (see \cite[Section 2.1]{PDE_DeGruyter})
associated with a closed, boundedly invertible operator on a Hilbert
space. Throughout let $H$ be a Hilbert space and $C:D(C)\subseteq H\to H$
a densely defined closed linear operator with $0\in\rho(C).$ 
\begin{defn}
For $k\in\mathbb{Z}$ we define the inner product $\langle\cdot|\cdot\rangle_{H_{k}(C)}$
on $D(C^{k})$ by 
\[
\langle x|y\rangle_{H_{k}(C)}\coloneqq\langle C^{k}x|C^{k}y\rangle_{H}\quad(x,y\in H_{k}(C)).
\]
Moreover, we define $H_{k}(C)$ as the completion of $D(C^{k})$ with
respect to the inner product $\langle\cdot|\cdot\rangle_{H_{k}(C)}.$
The sequence $(H_{k}(C))_{k\in\mathbb{Z}}$ is called the \emph{Sobolev-chain}
\emph{associated with $C$.}\end{defn}
\begin{prop}[{\selectlanguage{english}%
\cite[Theorem 2.1.6]{PDE_DeGruyter}\selectlanguage{american}
}]
 For each $j,k\in\mathbb{Z}$ with $j\leq k$ we have $H_{k}(C)\hookrightarrow H_{j}(C),$
where the embedding is continuous and dense. Moreover, the operator
\begin{align*}
D(C^{|k|+1})\subseteq H_{k+1} & \to H_{k}\\
x & \mapsto Cx
\end{align*}
has a unitary extension, which will be again denoted by $C.$\end{prop}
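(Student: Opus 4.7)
The central observation that would make the whole proposition nearly automatic is that, for every $k\in\mathbb{Z}$, the map $C^{k}:D(C^{k})\to H$ is a bijection with bounded inverse $C^{-k}$ and, by the very definition of the norm $\|\cdot\|_{H_{k}(C)}$, an isometry from $(D(C^{k}),\|\cdot\|_{H_{k}(C)})$ onto $H$. It therefore extends uniquely by density to a unitary $U_{k}:H_{k}(C)\to H$, and my plan is to transport every statement of the proposition to $H$ through these identifications.

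For the embedding $H_{k}(C)\hookrightarrow H_{j}(C)$ with $j\leq k$, I would first observe that $D(C^{k})\subseteq D(C^{j})$ (trivial when $j\leq 0$, since then $D(C^{j})=H$, and immediate from the iterated domain construction when $0\leq j\leq k$), so that the canonical map $\iota:H_{k}(C)\to H_{j}(C)$ makes sense at least on a dense subspace. I would then compute, for $x\in D(C^{k})$ and $w=U_{k}x=C^{k}x$, that $U_{j}\,\iota\,U_{k}^{-1}(w)=U_{j}x=C^{j}x=C^{j-k}w$, so the pulled-back embedding agrees on a dense subset with the bounded operator $C^{j-k}=(C^{-1})^{k-j}$, and hence equals it everywhere. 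This immediately yields continuity of $\iota$ with operator norm at most $\|C^{-1}\|^{k-j}$, and density of its image reduces to density of $C^{j-k}(H)=D(C^{k-j})$ in $H$, which I would obtain by duality from the injectivity of $((C^{*})^{-1})^{k-j}$.

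For the unitary extension of $C:D(C^{|k|+1})\subseteq H_{k+1}(C)\to H_{k}(C)$, I would define it as the composition $U_{k}^{-1}\circ U_{k+1}:H_{k+1}(C)\to H_{k}(C)$, which is manifestly unitary. Agreement with the original $C$ on $D(C^{|k|+1})$ would then follow from $U_{k+1}x=U_{k}(Cx)$, that is, from $C^{k+1}x=C^{k}(Cx)$ holding on that domain. The main obstacle, such as it is, is not conceptual but a careful case distinction between $k\geq 0$ and $k<0$: one has to unwind the integer-power and inverse conventions (and the choice of $|k|+1$ rather than $k+1$) to ensure that these compositional identities and the domain inclusions hold as stated. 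Once that bookkeeping is settled, the unitary identification $U_{k}$ carries all of the weight and no deeper tools are needed.
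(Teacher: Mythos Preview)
The paper does not supply its own proof of this proposition; it is simply quoted from \cite[Theorem 2.1.6]{PDE_DeGruyter} and used as a black box. So there is nothing to compare against, and I can only assess your argument on its merits.

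Your strategy---building the unitary identifications $U_{k}:H_{k}(C)\to H$ by extending $C^{k}$ and then transporting both the embedding and the operator $C$ through them---is the standard one and is correct in outline. One point deserves tightening: your opening claim that $C^{k}:D(C^{k})\to H$ is a bijection onto $H$ holds only for $k\geq 0$. For $k<0$ one has $D(C^{k})=H$ and the range of $C^{k}$ is $D(C^{-k})\subsetneq H$, a proper dense subspace; the isometry is therefore not onto, and the unitary $U_{k}$ genuinely arises only after passing to the completion $H_{k}(C)$ on the domain side and to the closure $\overline{D(C^{-k})}=H$ on the target side. Your phrase ``extends uniquely by density'' indicates you have this in mind, but the sentence preceding it overstates the situation and should be rewritten so that the case $k<0$ is not swept under the rug. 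Once that is fixed, the rest of your argument---the identification $U_{j}\,\iota\,U_{k}^{-1}=C^{j-k}$, the density via injectivity of $((C^{*})^{-1})^{k-j}$, and the definition of the unitary extension as $U_{k}^{-1}\circ U_{k+1}$ together with the bookkeeping $C^{k+1}x=C^{k}(Cx)$ on $D(C^{|k|+1})$---goes through without further difficulty.
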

\begin{example}
For $\rho>0$ the operator $\partial_{0}:D(\partial_{0})\subseteq H_{\rho,0}(\mathbb{R},H)\to H_{\rho,0}(\mathbb{R},H)$
is densely defined and closed. Moreover, since $\Re\partial_{0}=\rho$,
we have $0\in\rho(\partial_{0}).$ Thus, we can define the Sobolev-chain
associated with the time-derivative, which will be denoted by 
\[
H_{\rho,k}(\mathbb{R};H)\coloneqq H_{k}(\partial_{0,\rho})\quad(k\in\mathbb{Z}).
\]
Then the Dirac-distribution $\delta$ lies in $H_{\rho,-1}(\mathbb{R},H)$
with $\partial_{0}^{-1}\delta=\chi_{[0,\infty[}\in H_{\rho,0}(\mathbb{R},H).$\end{example}
\begin{rem}
We can extend the operator $C:D(C)\subseteq H\to H$ to the Hilbert
space $H_{\rho,0}(\mathbb{R},H)$ for each $\rho>0$ in the canonical
way, i.e., we set $\left(Cu\right)(t)=C\left(u(t)\right)$ for every
$u\in H_{\rho,0}(\mathbb{R},H)$ taking values in the domain of $C$
and almost every $t\in\mathbb{R}.$ Then, obviously the operators
$\partial_{0}$ and $C$ commute. Thus, for each $k,j\in\mathbb{Z}$
the operators can be realized as unitary operators 
\[
\partial_{0}:H_{\rho,k+1}(\mathbb{R},H_{j}(C))\to H_{\rho,k}(\mathbb{R},H_{j-1}(C))
\]
and 
\[
C:H_{\rho,k}(\mathbb{R},H_{j+1}(C))\to H_{\rho,k}(\mathbb{R},H_{j}(C)).
\]
Moreover, for all $M\in L(H_{\rho,0}(\mathbb{R},H))$ being such that
$\partial_{0}^{-1}M=M\partial_{0}^{-1}$ there is a unique continuous
extension to an operator on $H_{\rho,k}(\mathbb{R},H)$ for all $k\in\mathbb{Z}$.
\end{rem}

\section{\label{sec:A-Class-of}A Class of Fractional Material Laws }

\noindent We are now able to rigorously establish the specific form
of the material law operators we wish to consider, namely
\begin{equation}
M\left(\partial_{0}^{-1}\right)=M_{0}+\sum_{\alpha\in\Pi}\partial_{0}^{-\alpha}M_{\alpha}+\partial_{0}^{-1}M_{1}+\partial_{0}^{-1}M_{2}\left(\partial_{0}^{-1}\right).\label{eq:frac-law-1}
\end{equation}
Here $\Pi\subseteq]0,1[$ is a finite set, $M_{\alpha}$ for $\alpha\in\left\{ 0\right\} \cup\Pi\cup\left\{ 1\right\} $
are bounded linear operators in $H$ and $M_{2}$ is a bounded operator
valued function, analytic in a ball $B_{\mathbb{C}}\left(r,r\right)$
for some $r\in\oi0\infty$ with $r>\frac{1}{2\rho}$, compare \cite{PDE_DeGruyter,OP2012}.
Then the well-posedness of the corresponding evolutionary problem
(\ref{eq:evo_2}) can be shown with the help of the main result presented
in \cite{Pi2009-1}.
\begin{thm}[{\selectlanguage{english}%
\cite[Solution Theory]{Pi2009-1}\selectlanguage{american}
}]
\label{thm:sol_theory_orig}For $r>0$ let $M:B_{\mathbb{C}}(r,r)\to L(H)$
be a bounded analytic mapping satisfying 
\begin{equation}
\bigvee_{c>0}\bigwedge_{z\in B_{\mathbb{C}}(r,r)}\Re z^{-1}M(z)\geq c.\label{eq:sol-cond_1}
\end{equation}
Moreover, let $A:D(A)\subseteq H\to H$ be a skew-selfadjoint operator.
Then, for each $\rho>\frac{1}{2r}$ the operator $\partial_{0}M(\partial_{0}^{-1})+A$
is invertible and the inverse 
\[
\left(\partial_{0}M(\partial_{0}^{-1})+A\right)^{-1}:H_{\rho,0}(\mathbb{R},H)\to H_{\rho,0}(\mathbb{R},H)
\]
is bounded and causal in the sense of (\ref{eq:causal}). Moreover,
for each $k\in\mathbb{Z}$ the solution operator $\left(\partial_{0}M(\partial_{0}^{-1})+A\right)^{-1}$
can be established as a bounded linear operator on $H_{\rho,k}(\mathbb{R},H).$ 
\end{thm}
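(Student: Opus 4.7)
The plan is to transfer the problem to the spectral side of the Fourier-Laplace transform $\mathcal{L}_\rho$ developed in Section \ref{sec:Fractional-Calculus-and}, establish strict accretivity of $B\coloneqq\partial_{0}M(\partial_{0}^{-1})+A$ in $H_{\rho,0}(\mathbb{R},H)$, and then harvest invertibility, causality and the Sobolev-chain extension as separate consequences.

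First, I would verify that for $\rho>\frac{1}{2r}$ the curve $\{(\i t+\rho)^{-1}\,:\, t\in\mathbb{R}\}$ lies in the closed ball $\overline{B_{\mathbb{C}}(r,r)}$. A short calculation gives $|(\i t+\rho)^{-1}-r|^{2}=r^{2}-(2r\rho-1)/(t^{2}+\rho^{2})$, which is at most $r^{2}$ precisely when $2r\rho\geq 1$. Hence $M((\i t+\rho)^{-1})$ is defined and uniformly bounded on $\mathbb{R}$, and setting
\[
N(t)\coloneqq(\i t+\rho)\,M\bigl((\i t+\rho)^{-1}\bigr)=z^{-1}M(z)\big|_{z=(\i t+\rho)^{-1}},
\]
the hypothesis (\ref{eq:sol-cond_1}) yields $\Re N(t)\geq c$ for every $t\in\mathbb{R}$. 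By the operator-valued functional calculus, $\partial_{0}M(\partial_{0}^{-1})=\mathcal{L}_{\rho}^{*}N(m_{0})\mathcal{L}_{\rho}$, so this pointwise bound integrates to
\[
\Re\langle U|\partial_{0}M(\partial_{0}^{-1})U\rangle_{\rho,0,0}\geq c\,|U|_{\rho,0,0}^{2}
\]
for every $U\in D(\partial_{0})$.

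Second, because $A$ is skew-selfadjoint (extended canonically to $H_{\rho,0}(\mathbb{R},H)$), $\Re\langle U|AU\rangle_{\rho,0,0}=0$ for $U\in D(A)$. Adding the two estimates, $B$ is strictly accretive on $D(\partial_{0})\cap D(A)$ with lower bound $c$, so it is injective and has closed range; on the range, $\|B^{-1}\|\leq c^{-1}$. To upgrade closed range to surjectivity I would apply the same argument to the adjoint: since $\Re N(t)^{*}=\Re N(t)\geq c$ pointwise, the multiplier of $(\partial_{0}M(\partial_{0}^{-1}))^{*}$ is equally accretive, while $A^{*}=-A$ is again skew-selfadjoint. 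Hence $B^{*}$ is injective as well, and consequently $B$ has dense range, giving bounded invertibility on all of $H_{\rho,0}(\mathbb{R},H)$.

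Third, causality of $B^{-1}$ is a consequence of the analyticity and boundedness of $M$ on $B_{\mathbb{C}}(r,r)$ via the Paley-Wiener argument invoked in the introduction (cf.~\cite{PDE_DeGruyter}): such a symbol corresponds via $\mathcal{L}_{\rho}$ to a causal multiplier, $A$ acts pointwise in time and is therefore trivially causal, and the identity (\ref{eq:causal}) transfers from the ingredients to the resolvent. For the Sobolev-chain extension I would observe that $\partial_{0}M(\partial_{0}^{-1})$ commutes with $\partial_{0}$ by construction of the functional calculus, and $A$ commutes with $\partial_{0}$ because it acts pointwise in time; hence $B$, and therefore $B^{-1}$, commutes with $\partial_{0}$. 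The unitary isomorphism $\partial_{0}^{k}\colon H_{\rho,k}(\mathbb{R},H)\to H_{\rho,0}(\mathbb{R},H)$ then transports the $H_{\rho,0}$-bound on $B^{-1}$ to each $H_{\rho,k}(\mathbb{R},H)$, $k\in\mathbb{Z}$.

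The step I expect to be most delicate is the surjectivity argument via the adjoint, as $B$ is a sum of two unbounded operators whose domains are a priori unrelated. One has to show that the pointwise lower bound $\Re N(t)\geq c$ really passes to the operator-theoretic adjoint of the closure of $B$, and that no gap opens between a natural core and the full domain of the closed sum. The crucial leverage here is again the boundedness and analyticity of $M$ on the whole ball $B_{\mathbb{C}}(r,r)$, which forces $\partial_{0}M(\partial_{0}^{-1})$ to be realized as a bona fide bounded perturbation of $\partial_{0}M_{0}$-type operators on the spectral side, so that all closures and adjoints behave as one would naively expect.
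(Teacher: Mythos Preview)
The paper does not prove this theorem; it is quoted from \cite{Pi2009-1} (see also \cite{PDE_DeGruyter}) and used as a black box for the subsequent results. Your sketch is a faithful outline of the argument one finds in those references: passage to the spectral side via $\mathcal{L}_\rho$, the pointwise accretivity $\Re z^{-1}M(z)\geq c$ combined with skew-selfadjointness of $A$ to obtain strict accretivity of the closure, the dual estimate for surjectivity, a Paley--Wiener type argument for causality, and commutation with $\partial_0$ for the Sobolev-chain extension. The point you flag as delicate---that the accretivity estimate survives passage to the closure of the sum and to its adjoint---is indeed the technical heart of the cited proof, and is handled there by working on the spectral side where $N(m_0)+A$ is a fibrewise sum with a common core; your instinct about this is correct. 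There is no in-paper proof to compare against.
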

\noindent In order to apply this well-posedness result, we need to
require the positive definiteness constraint (\ref{eq:sol-cond_1})
for our material law of the form (\ref{eq:frac-law-1}), that is for
some constant $c_{0}\in\oi0\infty$ and all sufficiently large $\rho\in\oi0\infty$
we require
\begin{equation}
\Re\left\langle U|\partial_{0}M\left(\partial_{0}^{-1}\right)U\right\rangle _{\rho,0,0}\geq c_{0}\left\langle U|U\right\rangle _{\rho,0,0}\label{eq:solution-condition}
\end{equation}
for all $U\in D\left(\partial_{0}\right)$ by the unitarity of the
Fourier-Laplace transformation. 

\noindent If (\ref{eq:solution-condition}) holds for the case $M_{2}\left(\partial_{0}^{-1}\right)=0$
with a constant $c_{0}\in\oi0\infty$ and for all sufficiently large
$\rho\in\oi0\infty$, we may consider a non-vanishing remainder term
$M_{2}\left(\partial_{0}^{-1}\right)$ as a perturbation by assuming
\begin{equation}
\limsup_{\rho\to\infty}\left\Vert M_{2}\left(\partial_{0}^{-1}\right)\right\Vert _{H_{\rho,0}\left(\mathbb{R},H\right)}<c_{0},\label{eq:perturb-frac}
\end{equation}
yielding an estimate of the form (\ref{eq:solution-condition}) with
$c_{0}$ replaced by a positive constant $\tilde{c}_{0}<c_{0}-\limsup_{\rho\to\infty}\left\Vert M_{2}\left(\partial_{0}^{-1}\right)\right\Vert _{H_{\rho,0}\left(\mathbb{R},H\right)}$. 

\noindent Our aim is to give conditions in more explicit terms which
warrant condition (\ref{eq:solution-condition}). 

\noindent For convenience we shall use a monotonically increasing
enumeration $\left(\alpha_{0},\ldots,\alpha_{N}\right)$ of $\Pi$.
For the purpose of our following considerations we first record the
following rather elementary observation.
\begin{lem}
\label{fac-lem}Let $\iota$ be the canonical (isometric) embedding
of a closed subspace $V\subseteq H$ into $H.$ Then $\iota\iota^{*}$
is the orthogonal projector onto $V$. Let $\kappa$ be the canonical
embedding of $V^{\perp}$ into $H$ then we have
\begin{equation}
\iota\iota^{*}+\kappa\kappa^{*}=1.\label{eq:pro--sum}
\end{equation}
\end{lem}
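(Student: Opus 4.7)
The plan is to compute $\iota^{*}$ explicitly and then reduce both statements to the orthogonal decomposition $H=V\oplus V^{\perp}$.

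First, since $\iota$ is isometric, $\iota^{*}\iota=\mathrm{id}_{V}$. To identify $\iota^{*}h$ for a general $h\in H$, I would write $h=v+w$ with $v\in V$ and $w\in V^{\perp}$. For every $u\in V$ the defining property of the adjoint gives
\[
\langle u|\iota^{*}h\rangle_{V}=\langle \iota u|h\rangle_{H}=\langle \iota u|v\rangle_{H}+\langle \iota u|w\rangle_{H}=\langle u|v\rangle_{V},
\]
where the last equality uses that $\iota$ is isometric and that $\iota u\in V$ is orthogonal to $w$ in $H$. Hence $\iota^{*}h=v$, and consequently $\iota\iota^{*}h=v$ is precisely the orthogonal projection of $h$ onto $V$. (As a sanity check, $\iota\iota^{*}$ is automatically self-adjoint and satisfies $(\iota\iota^{*})^{2}=\iota(\iota^{*}\iota)\iota^{*}=\iota\iota^{*}$, so that it is in any case an orthogonal projection; the computation above merely identifies its range as $V$.)

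The same reasoning applied to $\kappa\colon V^{\perp}\to H$ then shows that $\kappa\kappa^{*}$ is the orthogonal projector onto $V^{\perp}$, sending $h=v+w$ to $w$. Adding the two identities yields $(\iota\iota^{*}+\kappa\kappa^{*})h=v+w=h$ for every $h\in H$, which is precisely (\ref{eq:pro--sum}).

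There is no real obstacle in this argument; the only point requiring some care is to distinguish the abstract Hilbert spaces $V$ and $V^{\perp}$ (the codomains of $\iota^{*}$ and $\kappa^{*}$) from their images inside $H$, so that inner products may be transported across the embeddings without ambiguity.
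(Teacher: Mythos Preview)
Your proof is correct and follows essentially the same approach as the paper: both compute $\iota^{*}$ directly from the adjoint relation and then invoke the orthogonal decomposition $H=V\oplus V^{\perp}$. The only cosmetic difference is that the paper treats the cases $y\in V$ and $y\in V^{\perp}$ separately, whereas you handle a general $h=v+w$ in one step.
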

\begin{proof}
It is 
\begin{align*}
\iota:V & \to H\\
x & \mapsto x
\end{align*}
an isometry with the inner product $\left\langle \:\cdot\:|\:\cdot\:\right\rangle _{V}$
taken as just the restriction of the inner product $\left\langle \:\cdot\:|\:\cdot\:\right\rangle _{H}$
of the Hilbert space $H$. We find 
\[
\left\langle x|y\right\rangle _{H}=\left\langle \iota x|y\right\rangle _{H}=\left\langle x|\iota^{*}y\right\rangle _{V}
\]
for all $x\in V$ and $y\in H$. We read off that for $y\in V$ we
have
\[
\left\langle x|y\right\rangle _{V}=\left\langle x|y\right\rangle _{H}=\left\langle \iota x|y\right\rangle _{H}=\left\langle x|\iota^{*}y\right\rangle _{V}
\]
for all $x\in V$ and so
\[
\iota^{*}y=y.
\]
Moreover, for $y\in V^{\perp}$ we read off
\[
0=\left\langle x|\iota^{*}y\right\rangle _{V}
\]
for all $x\in V$, i.e.
\[
\iota^{*}y=0.
\]
Consequently, we also have
\begin{align*}
\iota\iota^{*}y & =y\mbox{ for }y\in V,\\
\iota\iota^{*}y & =0\mbox{ for }y\in V^{\perp}.
\end{align*}
These properties, however, characterize the orthogonal projector onto
$V$. Similarly for $\kappa$ and the projection theorem yields (\ref{eq:pro--sum}).\end{proof}
\begin{rem}
\label{rem-block}Equality (\ref{eq:pro--sum}) may be written in
an intuitive block operator matrix notation as
\[
\left(\begin{array}{cc}
\iota & \kappa\end{array}\right)\left(\begin{array}{c}
\iota^{*}\\
\kappa^{*}
\end{array}\right)=1.
\]
It may also be worth noting that $\iota^{*}\iota:V\to V$ and $\kappa^{*}\kappa:V^{\perp}\to V^{\perp}$
are just the identities on $V$ and $V^{\perp}$, respectively. It
is common practice to identify $H$ and $V\oplus V^{\perp}$, which
makes
\begin{equation}
\left(\begin{array}{c}
\iota^{*}\\
\kappa^{*}
\end{array}\right):H\to V\oplus V^{\perp}\eqqcolon\left(\begin{array}{c}
V\\
V^{\perp}
\end{array}\right),\; x\mapsto\left(\begin{array}{c}
\iota^{*}\\
\kappa^{*}
\end{array}\right)x=\left(\begin{array}{c}
\iota^{*}x\\
\kappa^{*}x
\end{array}\right)\label{eq:proj}
\end{equation}
the identity. For our purposes it appears helpful to avoid this identification.
The mapping (\ref{eq:proj}) is obviously unitary, which allows us
for example to study an equation of the form 
\[
NU=F
\]
for a bounded linear operator $N$ in $H$ via the unitarily equivalent
block operator matrix equation 
\[
\left(\begin{array}{c}
\iota^{*}\\
\kappa^{*}
\end{array}\right)N\left(\begin{array}{cc}
\iota & \kappa\end{array}\right)\left(\begin{array}{c}
\iota^{*}\\
\kappa^{*}
\end{array}\right)U=\left(\begin{array}{cc}
\iota^{*}N\iota & \iota^{*}N\kappa\\
\kappa^{*}N\iota & \kappa^{*}N\kappa
\end{array}\right)\left(\begin{array}{c}
\iota^{*}U\\
\kappa^{*}U
\end{array}\right)=\left(\begin{array}{c}
\iota^{*}F\\
\kappa^{*}F
\end{array}\right).
\]
Note that
\[
\left(\begin{array}{cc}
\iota & \kappa\end{array}\right):\left(\begin{array}{c}
V\\
V^{\perp}
\end{array}\right)\to H,\;\left(\begin{array}{c}
u\\
v
\end{array}\right)\mapsto\left(\begin{array}{cc}
\iota & \kappa\end{array}\right)\left(\begin{array}{c}
u\\
v
\end{array}\right)=\iota u+\kappa v
\]
is the inverse of (\ref{eq:proj}).
\end{rem}
\noindent Let $P_{0}$, $F_{0}$, $Q_{0}$ orthogonal projectors in
$H$ with 
\[
P_{0}+F_{0}+Q_{0}=1
\]
and 
\[
P_{0}=\iota_{P_{0}}\iota_{P_{0}}^{*},\: F_{0}=\iota_{F_{0}}\iota_{F_{0}}^{*},\: Q_{0}=\iota_{Q_{0}}\iota_{Q_{0}}^{*}
\]
the corresponding factorizations in the above sense. We shall impose
the following general assumption.
\begin{condition}
\label{cond-frac-posdef}Let the continuous linear operator $M_{1}$
and the selfadjoint, continuous operators $M_{0}$, $M_{\alpha_{k}}$,
$k\in\{0,\ldots,N\}$, be such that $M_{0}$, $M_{\alpha_{k}}$ are
commuting with $P_{0}$, $Q_{0}$ (and so also $F_{0}$). Moreover,
let $P_{0}M_{\alpha_{k}}P_{0},\; Q_{0}M_{\alpha_{k}}Q_{0}\geq0$,
$k\in\{0,\ldots,N\}$, and $M_{0}$ non-negative and such that 
\begin{equation}
\iota_{P_{0}}^{*}M_{0}\iota_{P_{0}},\,\iota_{Q_{0}}^{*}\Re M_{1}\iota_{Q_{0}},\begin{array}{l}
\iota_{F_{0}}^{*}M_{\alpha_{0}}\iota_{F_{0}}\;\mbox{ are strictly positive definite. }\end{array}\label{eq:pos-def-frac-1}
\end{equation}
\end{condition}
\begin{thm}
Let Condition \ref{cond-frac-posdef} hold. Then the well-posedness
requirement (\ref{eq:solution-condition}) for 
\[
M(\partial_{0}^{-1})=M_{0}+\sum_{\alpha\in\Pi}\partial_{0}^{-\alpha}M_{\alpha}+\partial_{0}^{-1}M_{1}
\]
is satisfied.\end{thm}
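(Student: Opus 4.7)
The plan is to pass to the Fourier-Laplace side and reduce everything to a pointwise coercivity estimate in the spectral parameter. Since $\mathcal{L}_\rho$ is unitary with $\partial_0 = \mathcal{L}_\rho^* (\mathrm{i} m_0 + \rho) \mathcal{L}_\rho$, Plancherel together with Fubini turns (\ref{eq:solution-condition}) into the requirement that, for some $c_0 > 0$ and all sufficiently large $\rho > 0$,
\[
\Re \langle v \mid (\mathrm{i}\lambda + \rho) M((\mathrm{i}\lambda + \rho)^{-1}) v \rangle_H \geq c_0 \langle v \mid v\rangle_H
\]
holds for every $\lambda \in \mathbb{R}$ and $v \in H$. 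Using that $M_0$ and the $M_{\alpha_k}$ are selfadjoint (so the scalars $\langle v \mid M_0 v\rangle_H$, $\langle v \mid M_{\alpha_k} v\rangle_H$ are real), the left-hand side equals
\[
\rho \langle v \mid M_0 v\rangle_H + \sum_{k=0}^N \Re(\mathrm{i}\lambda + \rho)^{1-\alpha_k} \langle v \mid M_{\alpha_k} v\rangle_H + \langle v \mid \Re M_1 v\rangle_H.
\]

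Next, I would apply the three-fold orthogonal decomposition from Lemma \ref{fac-lem} and Remark \ref{rem-block}, writing $v = \iota_{P_0} v_P + \iota_{F_0} v_F + \iota_{Q_0} v_Q$. Because $M_0$ and the $M_{\alpha_k}$ commute with $P_0, F_0, Q_0$, their contributions split block-diagonally in this decomposition, and Condition \ref{cond-frac-posdef} combined with Lemma \ref{lem-frac-est-1} isolates three strong positive terms: the $P_0$-block of $\rho M_0$ gives at least $\rho\,\epsilon_P |v_P|^2$ (where $\epsilon_P>0$ is the lower bound of $\iota_{P_0}^*M_0\iota_{P_0}$), the $F_0$-block of $\Re(\mathrm{i}\lambda+\rho)^{1-\alpha_0} M_{\alpha_0}$ gives at least $\rho^{1-\alpha_0}\,\epsilon_F |v_F|^2$, and the $Q_0$-block of $\Re M_1$ gives at least $\epsilon_Q |v_Q|^2$. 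All further block-diagonal contributions from $M_0$ and the $M_{\alpha_k}$ on $P_0$ and $Q_0$ are non-negative by hypothesis.

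Two kinds of errors remain. First, the $F_0$-blocks of $M_{\alpha_k}$ for $k \geq 1$ may be indefinite; but since $\arg(\mathrm{i}\lambda+\rho)\in(-\pi/2,\pi/2)$ one has $\cos((1-\alpha_0)\arg(\mathrm{i}\lambda+\rho))\geq\sin(\alpha_0\pi/2)>0$, giving the pointwise bound
\[
\Re(\mathrm{i}\lambda+\rho)^{1-\alpha_k} \leq \frac{|\mathrm{i}\lambda+\rho|^{\alpha_0-\alpha_k}}{\sin(\alpha_0\pi/2)}\Re(\mathrm{i}\lambda+\rho)^{1-\alpha_0} \leq \frac{\rho^{\alpha_0-\alpha_k}}{\sin(\alpha_0\pi/2)}\Re(\mathrm{i}\lambda+\rho)^{1-\alpha_0}
\]
uniformly in $\lambda$; since $\alpha_k>\alpha_0$ this can be made arbitrarily small by choosing $\rho$ large, so $\sum_{k\geq 1}\|M_{\alpha_k}\|\Re(\mathrm{i}\lambda+\rho)^{1-\alpha_k}$ is eventually dominated by $\tfrac{\epsilon_F}{2}\Re(\mathrm{i}\lambda+\rho)^{1-\alpha_0}$. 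Second, $M_1$ is not assumed to commute with the projectors, so $\langle v \mid \Re M_1 v\rangle_H$ has indefinite $P_0$- and $F_0$-diagonal blocks and cross terms bounded only by $\|M_1\|\,|v_X|\,|v_Y|$, which I would absorb into the three strong diagonal terms by Young's inequality with $\rho$-dependent weights chosen individually for the three off-diagonal pairs.

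The principal obstacle is this last absorption: because the three strong diagonal terms scale as $\rho$, $\rho^{1-\alpha_0}$, and $1$ respectively, no single Young exponent will do; one must tune three separate exponents so that each half of a split is absorbable by its target scaling, and then verify that the residual coefficients of $|v_P|^2$, $|v_F|^2$, $|v_Q|^2$ remain uniformly positive in $\lambda$ and in $\rho$ for all $\rho$ sufficiently large. This produces the required $c_0>0$, and integration against $d\lambda$ (Plancherel in reverse) delivers (\ref{eq:solution-condition}).
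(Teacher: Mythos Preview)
Your proposal is correct and follows essentially the same route as the paper's proof: Fourier--Laplace reduction to a pointwise estimate, three-block decomposition via $P_0,F_0,Q_0$, dropping the non-negative $P_0$- and $Q_0$-diagonal fractional contributions, and absorbing the indefinite $M_1$-terms into the three strong diagonals (scaling as $\rho$, $\rho^{1-\alpha_0}$, $1$) by Young's inequality. The only minor difference is that the paper handles the higher $F_0$-fractional terms via the monotonicity Lemma~\ref{lem-frac-mono} together with $\phi_{\alpha_1}/\phi_{\alpha_0}\to 0$, whereas your direct pointwise bound $\Re(\mathrm{i}\lambda+\rho)^{1-\alpha_k}\le \rho^{\alpha_0-\alpha_k}\sin(\alpha_0\pi/2)^{-1}\,\Re(\mathrm{i}\lambda+\rho)^{1-\alpha_0}$ achieves the same thing without invoking that lemma; also, $\rho$-dependent Young weights are not actually needed, since fixed weights leave only bounded constants on the $P_0$- and $F_0$-blocks, which the growing diagonals absorb.
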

\begin{proof}
By the Fourier-Laplace transformation, it suffices to estimate $\rho M_{0}+\sum_{k=0}^{N}\phi_{\alpha}\left(\rho\right)M_{\alpha_{k}}+\Re M_{1}$
uniformly with respect to $t\in\mathbb{R}$ and all sufficiently large
$\rho\in\oi0\infty$. Here, we have set $\Re\left(\left(\i t+\rho\right)^{1-\alpha}\right)\eqqcolon\phi_{\alpha}\left(\rho\right)$. 

Due to the assumed commutativity of $P_{0}$ and $M_{\alpha}$ and
the non-negativity of $P_{0}M_{\alpha}P_{0}=P_{0}M_{\alpha}=M_{\alpha}P_{0}$
and $Q_{0}M_{\alpha}Q_{0}=Q_{0}M_{\alpha}=M_{\alpha}Q_{0}$, $\alpha\in\Pi$,
we  get with some generic constants $c_{0},c_{1},c_{2}\in\oi0\infty$
using standard estimates of the form $2ab\leq\epsilon a^{2}+\epsilon^{-1}b^{2}$
that
\begin{equation}
\begin{array}{l}
\rho M_{0}+\sum_{k=0}^{N}\phi_{\alpha_{k}}\left(\rho\right)M_{\alpha_{k}}+\Re M_{1}\\
=\rho P_{0}M_{0}P_{0}+\rho F_{0}M_{0}F_{0}+\rho Q_{0}M_{0}Q_{0}+\sum_{k=0}^{N}\phi_{\alpha_{k}}\left(\rho\right)P_{0}M_{\alpha_{k}}P_{0}\\
\quad+\sum_{k=0}^{N}\phi_{\alpha_{k}}\left(\rho\right)F_{0}M_{\alpha_{k}}F_{0}+\sum_{k=0}^{N}\phi_{\alpha_{k}}\left(\rho\right)Q_{0}M_{\alpha_{k}}Q_{0}\\
\quad+P_{0}\Re M_{1}P_{0}+F_{0}\Re M_{1}F_{0}+Q_{0}\Re M_{1}Q_{0}\\
\quad+2\Re F_{0}\Re M_{1}P_{0}+2\Re Q_{0}\Re M_{1}P_{0}+2\Re Q_{0}\Re M_{1}F_{0}\\
\geq\rho P_{0}M_{0}P_{0}+\sum_{k=0}^{N}\phi_{\alpha_{k}}\left(\rho\right)F_{0}M_{\alpha_{k}}F_{0}\\
\quad+P_{0}\Re M_{1}P_{0}+F_{0}\Re M_{1}F_{0}+Q_{0}\Re M_{1}Q_{0}\\
\quad+2\Re F_{0}\Re M_{1}P_{0}+2\Re Q_{0}\Re M_{1}P_{0}+2\Re Q_{0}\Re M_{1}F_{0}\\
\geq\rho c_{0}P_{0}+\sum_{k=0}^{N}\phi_{\alpha_{k}}\left(\rho\right)F_{0}M_{\alpha_{k}}F_{0}-c_{2}F_{0}+c_{1}Q_{0}
\end{array}\label{eq:estim-mat}
\end{equation}
holds for all sufficiently large $\rho\in\oi0\infty$. In consequence
we may reduce our considerations to 
\[
\sum_{k=0}^{N}\phi_{\alpha_{k}}\left(\rho\right)F_{0}M_{\alpha_{k}}F_{0}-c_{2}F_{0}.
\]
Indeed, using (\ref{eq:monot}) we obtain 

\begin{equation}
\begin{array}{l}
\sum_{k=0}^{N}\phi_{\alpha_{k}}\left(\rho\right)\iota_{F_{0}}^{*}M_{\alpha_{k}}\iota_{F_{0}}\geq\\
\geq\phi_{\alpha_{0}}\left(\rho\right)c_{3}+\sum_{k=1}^{N}\phi_{\alpha_{k}}\left(\rho\right)\iota_{F_{0}}^{*}M_{\alpha_{k}}\iota_{F_{0}}\\
\geq c_{3}\phi_{\alpha_{0}}\left(\rho\right)-c_{4}\phi_{\alpha_{1}}\left(\rho\right),
\end{array}\label{eq:F-Est}
\end{equation}
for some $c_{3},c_{4}>0.$ Thus, from (\ref{eq:F-Est}) we get for
all sufficiently large $\rho\in\oi0\infty$ 
\[
\sum_{k=0}^{N}\phi_{\alpha_{k}}\iota_{F_{0}}^{*}M_{\alpha_{k}}\iota_{F_{0}}\geq\frac{c_{3}}{2}\phi_{\alpha_{0}}\left(\rho\right),
\]
where we use that 
\[
\frac{\phi_{\alpha_{1}}(\rho)}{\phi_{\alpha_{0}}(\rho)}\to0\quad(\rho\to\infty)
\]
by (\ref{eq:real_part_frac}). Using that according to Lemma \ref{lem-frac-est-1}
we have $\phi_{\alpha_{0}}\left(\rho\right)\geq\rho^{1-\alpha_{0}}$
and inserting this into (\ref{eq:estim-mat}) yields 
\begin{align*}
\rho M_{0}+\sum_{k=0}^{N}\phi_{\alpha_{k}}\left(\rho\right)M_{\alpha_{k}}+\Re M_{1} & \geq\rho c_{0}P_{0}+\left(\frac{c_{3}}{2}\rho^{1-\alpha_{0}}-c_{1}\right)F_{0}+c_{2}Q_{0}\\
 & \geq\min\left\{ \rho c_{0},\frac{c_{3}}{2}\rho^{1-\alpha_{0}}-c_{1},c_{2}\right\} 
\end{align*}
and so the desired strict positive definiteness holds indeed for all
sufficiently large $\rho\in\oi0\infty$.
\end{proof}
We may summarize our findings in the following well-posedness theorem,
which according to the above is just a particular case of the main
result of \cite{Pi2009-1} (see Theorem \ref{thm:sol_theory_orig}).
\begin{thm}
\label{Thm:Sol}Let \textup{$M\left(\partial_{0}^{-1}\right)$ }\textup{\emph{be
a material law of the form (\ref{eq:frac-law}) satisfying Condition
\ref{cond-frac-posdef} and (\ref{eq:perturb-frac}). Then for every
sufficiently large $\rho\in\mathbb{R}_{>0}$ and every $f\in H_{\rho,k}\left(\mathbb{R},H\right)$
there is a unique solution $U\in H_{\rho,k}\left(\mathbb{R},H\right)$,
$k\in\mathbb{Z},$ satisfying}} equation (\ref{eq:evo_2}). Moreover,
the solution operator 
\[
\left(\partial_{0}M\left(\partial_{0}^{-1}\right)+A\right)^{-1}:H_{\rho,k}\left(\mathbb{R},H\right)\to H_{\rho,k}\left(\mathbb{R},H\right)
\]
is continuous and causal in the sense that, for every $a\in\mathbb{R}$,
if $g\in H_{\rho,k}\left(\mathbb{R},H\right)$ vanishes as an $H$-valued
generalized function on $]-\infty,a[$ then so does $\left(\partial_{0}M\left(\partial_{0}^{-1}\right)+A\right)^{-1}g$.\end{thm}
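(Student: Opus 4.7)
The plan is to reduce the assertion to the abstract solution theorem, Theorem~\ref{thm:sol_theory_orig}, by verifying its two hypotheses for the specific material law $M$ in (\ref{eq:frac-law}): boundedness and analyticity on a ball $B_{\mathbb{C}}(r,r)$, and the strict positivity (\ref{eq:sol-cond_1}). The boundedness and analyticity are routine: the constant term $M_{0}$ and the polynomial $z\mapsto zM_{1}$ trivially qualify, $z\mapsto zM_{2}(z)$ is bounded analytic on such a ball by hypothesis (with $r>1/(2\rho)$), and the fractional pieces $z\mapsto z^{\alpha}M_{\alpha}$ with $\alpha\in\Pi\subseteq\oi01$ are analytic there since every $z\in B_{\mathbb{C}}(r,r)$ has strictly positive real part so the principal branch of $z\mapsto z^{\alpha}$ applies, and they are uniformly bounded by $(2r)^{\alpha}\|M_{\alpha}\|$ on this bounded set.

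The crux of the argument is the positivity condition. By the spectral theorem for $\partial_{0}$ and the unitarity of $\mathcal{L}_{\rho}$, it suffices to show (\ref{eq:solution-condition}) for all sufficiently large $\rho>0$. The preceding theorem, applied to the truncated material law (i.e.\ with $M_{2}\equiv 0$), yields a constant $c_{0}>0$ with
\[
\Re \left\langle U \,\big|\, \partial_{0}\,\big( M_{0} + \sum_{\alpha\in\Pi} \partial_{0}^{-\alpha} M_{\alpha} + \partial_{0}^{-1} M_{1} \big)\, U \right\rangle_{\rho,0,0} \geq c_{0} \left\langle U | U \right\rangle_{\rho,0,0}
\]
for all sufficiently large $\rho$. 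The remaining contribution $\partial_{0}\cdot \partial_{0}^{-1} M_{2}(\partial_{0}^{-1}) = M_{2}(\partial_{0}^{-1})$ has real part bounded below by $-\|M_{2}(\partial_{0}^{-1})\|_{H_{\rho,0}(\mathbb{R},H)}$, and (\ref{eq:perturb-frac}) guarantees that this lower bound eventually exceeds $-c_{0}$. Adding the two contributions therefore produces (\ref{eq:solution-condition}) with a strictly positive constant $\tilde{c}_{0}<c_{0}-\limsup_{\rho\to\infty}\|M_{2}(\partial_{0}^{-1})\|_{H_{\rho,0}(\mathbb{R},H)}$, exactly as foreshadowed in the discussion preceding the statement.

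With these two hypotheses verified, Theorem~\ref{thm:sol_theory_orig} applies and directly furnishes the bounded invertibility of $\partial_{0}M(\partial_{0}^{-1})+A$ on each $H_{\rho,k}(\mathbb{R},H)$, the continuity of the solution operator, and its causality in the sense of (\ref{eq:causal}); the pointwise vanishing statement in the theorem is just a rephrasing of causality. The main obstacle is, in a sense, already absorbed into the preceding theorem, which handles the delicate projector decomposition under Condition~\ref{cond-frac-posdef} using the monotonicity from Lemma~\ref{lem-frac-mono} and the lower bound from Lemma~\ref{lem-frac-est-1}; here the only remaining work is the perturbation step for the $M_{2}$-term, which is straightforward given (\ref{eq:perturb-frac}).
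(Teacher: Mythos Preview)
Your proposal is correct and matches the paper's approach exactly: the paper does not give a separate proof of this theorem but simply remarks that, in view of the preceding theorem (positivity for the truncated material law) together with the perturbation estimate (\ref{eq:perturb-frac}), the statement is a particular case of Theorem~\ref{thm:sol_theory_orig}. You have merely spelled out the routine analyticity/boundedness check and the perturbation step that the paper leaves implicit.
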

\begin{rem}
~\end{rem}
\begin{enumerate}
\item Causality implies that if $g\in H_{\rho,0}\left(\mathbb{R},H\right)$
vanishes on $]-\infty,a[$ then the fractional derivatives occurring
in the material law are actually Riemann-Liouville type derivatives
$_{a}D_{t}^{\alpha_{k}}$, $k\in\{0,\ldots,N\}.$ 
\item We have chosen to discuss the case of $A$ skew-selfadjoint in $H.$
More general operators $A$ could also be admitted by using other
suitable boundary conditions, compare e.g. \cite{Picard2010,Trostorff2012_nonlin_bd}.
The operator $A$ can even be generalized to a monotone relation,
see \cite{Trostorff2012_nonlin_bd}.
\end{enumerate}
We shall give a first illustrative example in which case the above
theorem may be applied.
\begin{example}
Let $H_{0},H_{1},H_{2}$ be Hilbert spaces. For $\alpha,\beta\in]0,1[$
with $\alpha<\beta$ we consider the following operator matrices with
respect to the Hilbert space $H_{0}\oplus H_{1}\oplus H_{2}$:
\[
M_{0}\coloneqq\left(\begin{array}{ccc}
1 & 0 & 0\\
0 & 1 & 0\\
0 & 0 & 0
\end{array}\right),\quad M_{\alpha}\coloneqq\left(\begin{array}{ccc}
0 & 0 & 0\\
0 & 1 & 0\\
0 & 0 & 0
\end{array}\right),\quad M_{\beta}\coloneqq\left(\begin{array}{ccc}
0 & 0 & 0\\
0 & B & 0\\
0 & 0 & 0
\end{array}\right),\quad M_{1}\coloneqq\left(\begin{array}{ccc}
0 & 0 & 0\\
0 & 0 & 0\\
0 & 0 & 1
\end{array}\right).
\]
 For some selfadjoint operator $B\in L(H_{1})$. The resulting material
law is given by
\[
M_{0}+\partial_{0}^{-\alpha}M_{\alpha}+\partial_{0}^{-\beta}M_{\beta}+\partial_{0}^{-1}M_{1}.
\]
In the situation of Condition \ref{cond-frac-posdef}, we have $P_{0}=\left(\begin{array}{ccc}
1 & 0 & 0\\
0 & 0 & 0\\
0 & 0 & 0
\end{array}\right)$, $F_{0}=\left(\begin{array}{ccc}
0 & 0 & 0\\
0 & 1 & 0\\
0 & 0 & 0
\end{array}\right)$ and $Q_{0}=\left(\begin{array}{ccc}
0 & 0 & 0\\
0 & 0 & 0\\
0 & 0 & 1
\end{array}\right)$. Then it is easy to see that Condition \ref{cond-frac-posdef} is
satisfied. Furthermore, note that since one has to compensate the
coefficient $M_{\beta}$, one cannot choose $P_{0}=\left(\begin{array}{ccc}
1 & 0 & 0\\
0 & 1 & 0\\
0 & 0 & 0
\end{array}\right)$, $F_{0}=\left(\begin{array}{ccc}
0 & 0 & 0\\
0 & 0 & 0\\
0 & 0 & 0
\end{array}\right)$. 
\end{example}

\section{Some Applications\label{sec:Some-Applications}}

We want first  to focus on the case $M_{0}=0.$ The most simple case
of this type is now

\[
\Pi=\left\{ \alpha\right\} 
\]
 with $\alpha=1-\beta,$ $\beta\in]0,1[$, yielding problems of the
form 
\begin{equation}
\left(\partial_{0}^{\beta}M_{\alpha}+M_{1}+A\right)U=F,\label{eq:simple-case}
\end{equation}
for given bounded linear operators $M_{\alpha}$, $M_{1}$, a skew-selfadjoint
operator $A$ in a Hilbert space $H$, right-hand side $F\in H_{\rho,0}(\mathbb{R},H)$
and suitable $\rho\in\oi0\infty$. A particular instance of this first
problem class is the fractional Fokker-Planck equation to be discussed
in Section \ref{sub:Fractional-Reaction-Diffusion-Eq}. A more complicated
material law including an infinite number of fractional integrals
is discussed in Section \ref{sec:An-Application-to} within the context
of fractional visco-elasticity. Section \ref{sub:Initial-Boundary-Value}
outlines possible ways to formulate initial (boundary) value problems
for evolutionary equations of the form (\ref{eq:simple-case}).

\subsection{Fractional Reaction-Diffusion Equations\label{sub:Fractional-Reaction-Diffusion-Eq}}

Let us consider the fractional Fokker-Planck equation, compare e.g.\ \cite{metzler1999anomalous}.
The operators involved can be defined as follows. Let $\Omega\subseteq\mathbb{R}^{n}$
be an open subset. We denote
\begin{align*}
\grad_{c}\colon\interior C_{\infty}\left(\Omega\right) & \subseteq L^{2}(\Omega)\to L^{2}(\Omega)^{n}\\
f & \mapsto\left(\partial_{i}f\right)_{i\in\{1,\ldots,n\}}
\end{align*}
and $\dive\coloneqq-\grad_{c}^{*}$ and $\interior\grad\coloneqq\overline{\grad_{c}}$%
\footnote{Of course the domain of $\interior\grad$ is the Sobolev space $H_{0}^{1}(\Omega)$
of weakly differentiable functions satisfying generalized homogeneous
Dirichlet boundary conditions. The domain of $\dive$ is the set of
$L^{2}(\Omega)$-vector fields with distributional divergence in $L^{2}(\Omega)$.%
}. The operator matrix 
\[
A\coloneqq\left(\begin{array}{cc}
0 & \dive\\
\interior\grad & 0
\end{array}\right)
\]
is then skew-selfadjoint. Of course other boundary conditions can
also be treated, see e.g.\ \cite{Picard2010,Trostorff2012_nonlin_bd,PDE_DeGruyter}.
Let $\alpha\in]0,1[$. For the material we assume
\[
M_{\alpha}\coloneqq\left(\begin{array}{cc}
\kappa_{\alpha} & 0\\
0 & 0
\end{array}\right),\: M_{1}\coloneqq\left(\begin{array}{cc}
\mu_{00} & \mu_{01}\\
\mu_{10} & \mu_{11}
\end{array}\right)
\]
with $\kappa_{\alpha},\:\Re\mu_{11}$ continuous, selfadjoint and
strictly positive definite on the respective $L^{2}$-type block component
spaces, $\mu_{ik}$ bounded, linear operators, $i,k\in\{0,1\}$. Thus,
the block operator matrix yielding the (fractional) Fokker-Planck
equation looks formally like
\[
\left(\begin{array}{cc}
\partial_{0}^{1-\alpha}\kappa_{\alpha}+\mu_{00} & \quad\dive+\mu_{01}\\
\interior\grad+\mu_{10} & \quad\mu_{11}
\end{array}\right).
\]
By our general theory developed above, the latter operator matrix
is continuously invertible in the Hilbert space of $H_{\rho,0}$-functions
taking values in $L^{2}(\Omega)$ for sufficiently large $\rho>0$.
A simple row operation yields
\[
\left(\begin{array}{cc}
\partial_{0}^{1-\alpha}\kappa_{\alpha}+\mu_{00}-\left(\dive+\mu_{01}\right)\mu_{11}^{-1}\left(\interior\grad+\mu_{10}\right) & \quad0\\
\interior\grad+\mu_{10} & \quad\mu_{11}
\end{array}\right).
\]
The operator in the first row of the block operator matrix reduces
to the following fractional convection-diffusion operator 
\begin{align*}
 & \partial_{0}^{1-\alpha}\kappa_{\alpha}+\mu_{00}-\left(\dive+\mu_{01}\right)\mu_{11}^{-1}\left(\interior\grad+\mu_{10}\right)\\
 & =\partial_{0}^{1-\alpha}\kappa_{\alpha}+\left(\mu_{00}-\mu_{01}\mu_{11}^{-1}\mu_{10}\right)-\dive\mu_{11}^{-1}\interior\grad-\mu_{01}\mu_{11}^{-1}\interior\grad-\dive\mu_{11}^{-1}\mu_{10}.
\end{align*}
 The second row represents the flux $\Phi$ in terms of the unknown
probability density $\theta$: 
\begin{equation}
\Phi=-\mu_{11}^{-1}\interior\grad\theta-\mu_{11}^{-1}\mu_{10}\theta.\label{eq:Fourier}
\end{equation}
Clearly, formally for $\alpha=0$ we recover the usual convection-diffusion
equation. Speaking in terms of heat conduction, which is, up to a
choice of units (and interpretation), governed by the same type of
convection-diffusion equation, (\ref{eq:Fourier}) is a variant of
Fourier's law.

\subsection{\label{sec:An-Application-to}An Application to a General Fractional
Kelvin-Voigt Model for Visco-Elastic Solids.}

As another application we consider the material law associated with
a fractional Kelvin-Voigt model. It is a well-established idea that
material laws describing visco-elastic solids may be more simple to
match with measurements if fractional integrals are admitted, see
e.g. \cite{pre05172699}, \cite{doi:10.1142/S0218396X03002024}. %
{} The spatial operator
\[
A\coloneqq\left(\begin{array}{cc}
0 & -\Div\\
-\Grad & 0
\end{array}\right),
\]
where $\Div$ is the restriction of the tensorial divergence operator
$\dive$ to symmetric tensors of order 2 and $\Grad$ is the symmetric
part of the Jacobian matrix $d\otimes u$ of $u$, is skew-selfadjoint
if suitable boundary conditions at the boundary of the underlying
domain $\Omega\subseteq\mathbb{R}^{3}$ are imposed, e.g., similar
to the previous example, vanishing of the displacement $u$ on the
boundary. This choice of boundary condition would amount to replacing
$\Grad$ by the closure $\interior\Grad$ of the restriction of $\Grad$
to vector fields with smooth components vanishing outside of a compact
subset of $\Omega.$ We will not be explicit on the boundary conditions
under consideration and simply assume that $A$ is considered with
a domain such that it becomes skew-selfadjoint. We refer to \cite[Section 4.2]{Trostorff2012_nonlin_bd}
or \cite[Section 2]{waurick2013_fract_elast} for more details. The
above general perspective to evolutionary equations allow for very
general materials to be covered by the approach.

Let us take a closer look at a fractional Kelvin-Voigt model for visco-elastic
material. In this model, given an external forcing term $f$, we have
the equation 
\begin{align*}
\partial_{0}\eta\partial_{0}u-\mathrm{Div}T & =f
\end{align*}
linking the stress tensor field $T$ with the displacement vector
field $u$, where $\eta$ is a bounded selfadjoint and strictly positive
definite operator, accompanied by a material relation of the form
\begin{equation}
T=\left(C+D\partial_{0}^{\alpha}\right)\mathcal{E}\label{eq:KV}
\end{equation}
with $\mathcal{E}\coloneqq\Grad u$, $\alpha\in\rci01$. Here the
case $D=0$ would correspond to purely elastic behavior and the case
$\alpha=1$ and $D\not=0$ would describe the classical Kelvin-Voigt
material. Introducing $v\coloneqq\partial_{0}u$ as a new unknown
and differentiating (\ref{eq:KV}), we obtain 
\begin{align*}
\partial_{0}\eta v-\mathrm{Div}T & =f\\
\partial_{0}\left(C+D\partial_{0}^{\alpha}\right)^{-1}T & =\mathrm{Grad}v
\end{align*}
yielding formally an evolutionary equation of the form
\begin{align*}
\left(\partial_{0}\left(\begin{array}{cc}
\eta & 0\\
0 & \left(C+D\partial_{0}^{\alpha}\right)^{-1}
\end{array}\right)+\left(\begin{array}{cc}
0 & -\Div\\
-\Grad & 0
\end{array}\right)\right)\left(\begin{array}{c}
v\\
T
\end{array}\right) & =\left(\begin{array}{c}
f\\
0
\end{array}\right).
\end{align*}

Thus, we obtain a system of the form
\begin{align*}
\left(\partial_{0}M\left(\partial_{0}^{-1}\right)+A\right)\left(\begin{array}{c}
v\\
T
\end{array}\right) & =\left(\begin{array}{c}
f\\
0
\end{array}\right)
\end{align*}
where $A$ is a skew-selfadjoint realization of the formal block operator
matrix given by
\[
\left(\begin{array}{cc}
0 & -\Div\\
-\Grad & 0
\end{array}\right)
\]
and the material law operator 
\[
M\left(\partial_{0}^{-1}\right)\coloneqq\left(\begin{array}{cc}
\eta & 0\\
0 & \left(C+D\partial_{0}^{\alpha}\right)^{-1}
\end{array}\right).
\]
In order to warrant existence and boundedness of the operator $\left(C+D\partial_{0}^{\alpha}\right)^{-1}$,
we impose the condition %
{} $C,D\geq0$ and
\begin{equation}
C+D\rho^{\alpha}\geq c_{0}\label{eq:posdef-KV}
\end{equation}
for some $c_{0}\in\mathbb{R}_{>0}$ and all sufficiently large $\rho\in\oi0\infty$.
However, in order to show that $M\left(\partial_{0}^{-1}\right)$
satisfies the well-posedness condition, we have to impose a slightly
stronger condition. 
\begin{thm}
Assume that $C,D$ are non-negative selfadjoint operators and such
that $\iota_{0,D}^{*}C\iota_{0,D}$, $\iota_{1,D}^{*}D\iota_{1,D}$
are both strictly positive definite, where $\iota_{0,D}$ and $\iota_{1,D}$
are the canonical embeddings of the null space and range of $D$ in
the underlying Hilbert space of symmetric $3\times3$-matrices with
entries in $L^{2}\left(\Omega\right)$, respectively. Then estimate
(\ref{eq:posdef-KV}) holds and \textup{$\left(C+D\partial_{0}^{\alpha}\right)^{-1}$}
satisfies Condition \ref{cond-frac-posdef}.\end{thm}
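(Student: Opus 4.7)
The proof splits into two claims: the scalar estimate (\ref{eq:posdef-KV}) and the assertion that $(C+D\partial_{0}^{\alpha})^{-1}$ realizes a material law conforming to Condition \ref{cond-frac-posdef}. For the first, exploit the self-adjointness of $D$ to decompose the underlying Hilbert space as $N(D)\oplus \overline{R(D)}$ with canonical embeddings $\iota_{0,D}$ and $\iota_{1,D}$; writing $x_N\coloneqq\iota_{0,D}^*x$ and $x_R\coloneqq\iota_{1,D}^*x$, one has $\langle Dx,x\rangle = \langle \iota_{1,D}^*D\iota_{1,D}\,x_R,x_R\rangle \geq \delta_1\|x_R\|^2$ from the strict positivity of $\iota_{1,D}^*D\iota_{1,D}$. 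Expanding $\langle Cx,x\rangle$ in block form with $C_{00}\coloneqq \iota_{0,D}^*C\iota_{0,D}$ and $C_{01}\coloneqq \iota_{0,D}^*C\iota_{1,D}$, using $C_{00}\geq \delta_0 I$, $\iota_{1,D}^*C\iota_{1,D}\geq 0$, and the Young inequality $|2\Re\langle C_{01}x_R,x_N\rangle|\leq \epsilon\|x_N\|^2+\epsilon^{-1}\|C_{01}\|^2\|x_R\|^2$ with $\epsilon\in(0,\delta_0)$, one obtains $\langle(C+\rho^\alpha D)x,x\rangle \geq (\delta_0-\epsilon)\|x_N\|^2 + (\rho^\alpha \delta_1 - K)\|x_R\|^2$ for a constant $K=K(\epsilon,\|C\|)$. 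For $\rho$ sufficiently large this is bounded below by $c_0\|x\|^2$, which is (\ref{eq:posdef-KV}).

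For the second claim, choose the projectors $P_0\coloneqq \iota_{0,D}\iota_{0,D}^*$, $F_0\coloneqq \iota_{1,D}\iota_{1,D}^*$, $Q_0\coloneqq 0$, which sum to the identity. Inverting $C+D\partial_0^\alpha$ via Schur complement against this decomposition reduces inversion on $\overline{R(D)}$ to that of $\widetilde C_{11}+D_R\partial_0^\alpha$, where $D_R\coloneqq\iota_{1,D}^*D\iota_{1,D}\geq\delta_1 I$ and $\widetilde C_{11}$ denotes the Schur complement of $C_{00}$ in $C$. Since $\|D_R^{-1}\widetilde C_{11}\partial_0^{-\alpha}\|\leq \|D_R^{-1}\|\|\widetilde C_{11}\|\,\rho^{-\alpha}<1$ for $\rho$ large (invoking Lemma \ref{lem-frac-est-1}), a Neumann expansion in powers of $\partial_0^{-\alpha}$ yields
\[
(C+D\partial_0^\alpha)^{-1} = M_0 + \partial_0^{-\alpha}M_\alpha + R(\partial_0^{-1}),
\]
where $M_0\coloneqq \iota_{0,D}C_{00}^{-1}\iota_{0,D}^*$ and $M_\alpha$ is taken as the block-diagonal operator $\iota_{0,D}(C_{00}^{-1}C_{01})D_R^{-1}(C_{00}^{-1}C_{01})^*\iota_{0,D}^* + \iota_{1,D}D_R^{-1}\iota_{1,D}^*$. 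Both $M_0$ and $M_\alpha$ are self-adjoint and block-diagonal with respect to $N(D)\oplus\overline{R(D)}$, so they commute with $P_0,F_0,Q_0$; $M_0\geq 0$; both $P_0M_\alpha P_0$ and $F_0M_\alpha F_0$ are non-negative, each being of the form $V D_R^{-1} V^*$ with $D_R^{-1}\geq 0$; and $\iota_{P_0}^*M_0\iota_{P_0}=C_{00}^{-1}$, $\iota_{F_0}^*M_\alpha\iota_{F_0}=D_R^{-1}$ are strictly positive definite since the hypotheses yield two-sided bounds on $C_{00}$ and $D_R$. The $\iota_{Q_0}$-condition on $\Re M_1$ is vacuous as $Q_0=0$.

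The main obstacle is accommodating the remainder $R(\partial_0^{-1})$, which collects the off-diagonal ($N(D)/\overline{R(D)}$-cross) part of the first-order coefficient together with all higher-order Neumann contributions $\partial_0^{-k\alpha}$ for $k\geq 2$. Each piece has operator norm of order $\rho^{-k\alpha}$, so the full remainder vanishes in $H_{\rho,0}$-norm as $\rho\to\infty$, and the perturbation criterion (\ref{eq:perturb-frac}) is in principle applicable. The technical snag is that $\partial_0^{-k\alpha}$ with $k\alpha<1$ is not of the form $\partial_0^{-1}M_2(\partial_0^{-1})$ with $M_2$ analytic at zero, so one must either enlarge $\Pi$ to contain all $k\alpha\in(0,1)$ — verifying the sign conditions on the block-diagonal parts of the Neumann coefficients (whose signs alternate) by a pairwise grouping argument — or isolate the tail from the principal part and absorb it directly through (\ref{eq:perturb-frac}). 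Once this is matched, Theorem \ref{Thm:Sol} (itself a specialization of the main result of \cite{Pi2009-1}, cf.\ Theorem \ref{thm:sol_theory_orig}) delivers well-posedness.
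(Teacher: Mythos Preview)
Your argument for (\ref{eq:posdef-KV}) is fine and is essentially what the paper does (it dismisses this with a one-line appeal to ``Euklid's inequality'').

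The genuine gap is in the second part. By choosing $P_0=\iota_{0,D}\iota_{0,D}^*$ and $F_0=\iota_{1,D}\iota_{1,D}^*$ you force every $M_{\alpha_k}$ to be block-diagonal with respect to $N(D)\oplus\overline{R(D)}$, since Condition \ref{cond-frac-posdef} demands that each $M_{\alpha_k}$ commute with $P_0$ and $F_0$. But the coefficient of $\partial_0^{-\alpha}$ in the Schur/Neumann expansion is \emph{not} block-diagonal: the cross blocks $-\iota_{0,D}C_{00}^{-1}C_{01}D_R^{-1}\iota_{1,D}^*$ and its adjoint appear at order $\partial_0^{-\alpha}$. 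You push them into the remainder $R$, but then $\partial_0 R$ contains a piece of order $\partial_0^{1-\alpha}$, which is unbounded as $\rho\to\infty$, so (\ref{eq:perturb-frac}) fails. Neither of your proposed workarounds rescues this: enlarging $\Pi$ to $\{k\alpha:k\alpha<1\}$ still leaves the off-diagonal parts at every level $k\alpha<1$ with nowhere to go (they still fail to commute with your $P_0,F_0$, so they cannot be part of $M_{k\alpha}$), and ``absorbing directly through (\ref{eq:perturb-frac})'' is precisely what was just seen not to work.

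The paper sidesteps this by a different, coarser choice of projectors. It treats the \emph{full} material law $M(\partial_0^{-1})=\operatorname{diag}\bigl(\eta,(C+D\partial_0^\alpha)^{-1}\bigr)$ on the $(v,T)$-space and takes $P_0$, $F_0$ to be the projectors onto the $v$- and $T$-components, with $Q_0=0$. Every $M_\gamma$ then lives entirely inside the $T$-block, so commutation with $P_0,F_0$ is automatic regardless of the internal $N(D)/\overline{R(D)}$ structure. Concretely, with $W=\left(\begin{smallmatrix}1&-C_{00}^{-1}C_{01}\\0&1\end{smallmatrix}\right)$ the paper writes
\[
(C+D\partial_0^\alpha)^{-1}=\left(\begin{array}{cc}\iota_{0,D}&\iota_{1,D}\end{array}\right)W\left(\begin{array}{cc}C_{00}^{-1}&0\\0&\partial_0^{-\alpha}\sqrt{D_{11}^{-1}}\,\mathcal G(\partial_0^{-1})\,\sqrt{D_{11}^{-1}}\end{array}\right)W^*\left(\begin{array}{c}\iota_{0,D}^*\\\iota_{1,D}^*\end{array}\right),
\]
expands $\mathcal G$ as a Neumann series, takes $\Pi=\{(1+n)\alpha:n=0,\ldots,\lceil\alpha^{-1}\rceil-1\}$ with each $M_\gamma$ of the form $W(\cdot)W^*$ (hence self-adjoint, but \emph{not} block-diagonal in the $N(D)\oplus\overline{R(D)}$ sense), and then shows that the tail $n\geq\lceil\alpha^{-1}\rceil$ is genuinely $\partial_0^{-1}M_2(\partial_0^{-1})$ with $\|M_2(\partial_0^{-1})\|\to 0$. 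The key device you are missing is the $W$-congruence, which packages the off-diagonal information into self-adjoint coefficients $M_\gamma$ rather than trying to discard it.
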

\begin{proof}
First of all note that estimate (\ref{eq:posdef-KV}) is a straight
forward consequence of Euklid's inequality. Now, we have by decomposition
(see also Lemma \ref{fac-lem}):{\footnotesize{} 
\[
\left(C+D\partial_{0}^{\alpha}\right)^{-1}=\left(\begin{array}{cc}
\iota_{0,D} & \iota_{1,D}\end{array}\right)\left(\begin{array}{cc}
\iota_{0,D}^{*}C\iota_{0,D} & \iota_{0,D}^{*}C\iota_{1,D}\\
\iota_{1,D}^{*}C\iota_{0,D} & \left(\iota_{1,D}^{*}C\iota_{1,D}+\iota_{1,D}^{*}D\iota_{1,D}\partial_{0}^{\alpha}\right)
\end{array}\right)^{-1}\left(\begin{array}{c}
\iota_{0,D}^{*}\\
\iota_{1,D}^{*}
\end{array}\right).
\]
}We abbreviate $C_{jk}\coloneqq\iota_{j,D}^{*}C\iota_{k,D}$ $(j,k\in\{0,1\})$,
$D_{11}\coloneqq\iota_{1,D}^{*}D\iota_{1,D}$, 
\[
\tilde{C}_{11}\coloneqq C_{11}-C_{10}C_{00}^{-1}C_{01}
\]
and $W\coloneqq\left(\begin{array}{cc}
1 & -C_{00}^{-1}C_{01}\\
0 & 1
\end{array}\right)$. Then we get that
\[
\begin{array}{l}
\left(\begin{array}{cc}
C_{00} & C_{01}\\
C_{10} & \left(C_{11}+D_{11}\partial_{0}^{\alpha}\right)
\end{array}\right)^{-1}=W\left(\begin{array}{cc}
C_{00}^{-1} & 0\\
0 & \left(\tilde{C}_{11}+D_{11}\partial_{0}^{\alpha}\right)^{-1}
\end{array}\right)W^{*}.\end{array}
\]
With $\mathcal{G}\left(\partial_{0}^{-1}\right)\coloneqq\left(\partial_{0}^{-\alpha}\sqrt{D_{11}^{-1}}\tilde{C}_{11}\sqrt{D_{11}^{-1}}+1\right)^{-1}=\sum_{n=0}^{\infty}\partial_{0}^{-n\alpha}\left(-\sqrt{D_{11}^{-1}}\tilde{C}_{11}\sqrt{D_{11}^{-1}}\right)^{n}$
for $\rho$ sufficiently large, we have
\[
\begin{array}{l}
\left(\tilde{C}_{11}+D_{11}\partial_{0}^{\alpha}\right)^{-1}=\partial_{0}^{-\alpha}\sqrt{D_{11}^{-1}}\mathcal{G}\left(\partial_{0}^{-1}\right)\sqrt{D_{11}^{-1}}\end{array}.
\]
Thus,
\begin{align*}
 & \left(C+D\partial_{0}^{\alpha}\right)^{-1}\\
 & =\left(\begin{array}{cc}
\iota_{0,D} & \iota_{1,D}\end{array}\right)W\left(\begin{array}{cc}
C_{00}^{-1} & 0\\
0 & 0
\end{array}\right)W^{*}\left(\begin{array}{c}
\iota_{0,D}^{*}\\
\iota_{1,D}^{*}
\end{array}\right)+\\
 & \quad\left(\begin{array}{cc}
\iota_{0,D} & \iota_{1,D}\end{array}\right)W\left(\begin{array}{cc}
0 & 0\\
0 & \partial_{0}^{-\alpha}\sqrt{D_{11}^{-1}}\mathcal{G}\left(\partial_{0}^{-1}\right)\sqrt{D_{11}^{-1}}
\end{array}\right)W^{*}\left(\begin{array}{c}
\iota_{0,D}^{*}\\
\iota_{1,D}^{*}
\end{array}\right)\\
 & =\left(\begin{array}{cc}
\iota_{0,D} & \iota_{1,D}\end{array}\right)W\left(\begin{array}{cc}
C_{00}^{-1} & 0\\
0 & 0
\end{array}\right)W^{*}\left(\begin{array}{c}
\iota_{0,D}^{*}\\
\iota_{1,D}^{*}
\end{array}\right)\\
 & \quad+\sum_{n=0}^{\infty}\left(\begin{array}{cc}
\iota_{0,D} & \iota_{1,D}\end{array}\right)W\left(\begin{array}{cc}
0 & 0\\
0 & \partial_{0}^{-\alpha}\sqrt{D_{11}^{-1}}\partial_{0}^{-n\alpha}\left(-\sqrt{D_{11}^{-1}}\tilde{C}_{11}\sqrt{D_{11}^{-1}}\right)^{n}\sqrt{D_{11}^{-1}}
\end{array}\right)W^{*}\left(\begin{array}{c}
\iota_{0,D}^{*}\\
\iota_{1,D}^{*}
\end{array}\right)\\
 & =\left(\begin{array}{cc}
\iota_{0,D} & \iota_{1,D}\end{array}\right)W\left(\left(\begin{array}{cc}
C_{00}^{-1} & 0\\
0 & 0
\end{array}\right)\right)W^{*}\left(\begin{array}{c}
\iota_{0,D}^{*}\\
\iota_{1,D}^{*}
\end{array}\right)\\
 & \quad+\sum_{n=0}^{\lceil\alpha^{-1}\rceil-1}\left(\begin{array}{cc}
\iota_{0,D} & \iota_{1,D}\end{array}\right)W\left(\begin{array}{cc}
0 & 0\\
0 & \partial_{0}^{-\alpha}\sqrt{D_{11}^{-1}}\partial_{0}^{-n\alpha}\left(-\sqrt{D_{11}^{-1}}\tilde{C}_{11}\sqrt{D_{11}^{-1}}\right)^{n}\sqrt{D_{11}^{-1}}
\end{array}\right)W^{*}\left(\begin{array}{c}
\iota_{0,D}^{*}\\
\iota_{1,D}^{*}
\end{array}\right)\\
 & \quad+\sum_{n=\lceil\alpha^{-1}\rceil}^{\infty}\left(\begin{array}{cc}
\iota_{0,D} & \iota_{1,D}\end{array}\right)W\left(\begin{array}{cc}
0 & 0\\
0 & \partial_{0}^{-\alpha}\sqrt{D_{11}^{-1}}\partial_{0}^{-n\alpha}\left(-\sqrt{D_{11}^{-1}}\tilde{C}_{11}\sqrt{D_{11}^{-1}}\right)^{n}\sqrt{D_{11}^{-1}}
\end{array}\right)W^{*}\left(\begin{array}{c}
\iota_{0,D}^{*}\\
\iota_{1,D}^{*}
\end{array}\right).
\end{align*}
Hence, with 
\[
M_{0}=\left(\begin{array}{cc}
\eta & 0\\
0 & \left(\begin{array}{cc}
\iota_{0,D} & \iota_{1,D}\end{array}\right)W\left(\begin{array}{cc}
C_{00}^{-1} & 0\\
0 & 0
\end{array}\right)W^{*}\left(\begin{array}{c}
\iota_{0,D}^{*}\\
\iota_{1,D}^{*}
\end{array}\right)
\end{array}\right),
\]
and 
\[
M_{\gamma}=\left(\begin{array}{cc}
0 & 0\\
0 & \left(\begin{array}{cc}
\iota_{0,D} & \iota_{1,D}\end{array}\right)W\left(\begin{array}{cc}
0 & 0\\
0 & \sqrt{D_{11}^{-1}}\left(-\sqrt{D_{11}^{-1}}\tilde{C}_{11}\sqrt{D_{11}^{-1}}\right)^{n}\sqrt{D_{11}^{-1}}^{-1}
\end{array}\right)W^{*}\left(\begin{array}{c}
\iota_{0,D}^{*}\\
\iota_{1,D}^{*}
\end{array}\right)
\end{array}\right)
\]
 for $\gamma\in\Pi\coloneqq\{\left(1+n\right)\alpha;n\in\{0,\ldots,\lceil\alpha^{-1}\rceil-1\}\}$
we have
\[
M\left(\partial_{0}^{-1}\right)=M_{0}+\sum_{\gamma\in\Pi}\partial_{0}^{-\gamma}M_{\gamma}+\partial_{0}^{-1}M_{2}\left(\partial_{0}^{-1}\right),
\]
where
\begin{align*}
 & \partial_{0}^{-1}M_{2}\left(\partial_{0}^{-1}\right)\\
 & \coloneqq\sum_{n=\lceil\alpha^{-1}\rceil}^{\infty}\left(\begin{array}{cc}
\iota_{0,D} & \iota_{1,D}\end{array}\right)W\left(\begin{array}{cc}
0 & 0\\
0 & \partial_{0}^{-\alpha}\sqrt{D_{11}^{-1}}\partial_{0}^{-n\alpha}\left(-\sqrt{D_{11}^{-1}}\tilde{C}_{11}\sqrt{D_{11}^{-1}}\right)^{n}\sqrt{D_{11}^{-1}}
\end{array}\right)W^{*}\left(\begin{array}{c}
\iota_{0,D}^{*}\\
\iota_{1,D}^{*}
\end{array}\right).
\end{align*}
Now, for $\rho\in\mathbb{R}_{>0}$ sufficiently large we estimate
\begin{align*}
\left\Vert M_{2}\left(\partial_{0}^{-1}\right)\right\Vert  & \leq K_{0}\left\Vert \partial_{0}\sum_{n=\lceil\alpha^{-1}\rceil}^{\infty}\partial_{0}^{-\alpha}\partial_{0}^{-n\alpha}\left(-\sqrt{D_{11}^{-1}}\tilde{C}_{11}\sqrt{D_{11}^{-1}}\right)^{n}\right\Vert \\
 & \leq K_{0}\sum_{n=\lceil\alpha^{-1}\rceil}^{\infty}\rho^{-\alpha+1-n\alpha}K_{1}^{n}=K_{0}\rho^{1-\alpha}\sum_{n=\lceil\alpha^{-1}\rceil}^{\infty}\left(\rho^{-\alpha}K_{1}\right)^{n}\\
 & =K_{0}\rho^{1-\alpha}\left(\frac{1}{1-\rho^{-\alpha}K_{1}}-\frac{1-\rho^{-\lceil\alpha^{-1}\rceil\alpha}K_{1}^{\lceil\alpha^{-1}\rceil}}{1-\rho^{-\alpha}K_{1}}\right)\\
 & =K_{0}\rho^{1-\alpha}\left(\frac{\rho^{-\lceil\alpha^{-1}\rceil\alpha}K_{1}^{\lceil\alpha^{-1}\rceil}}{1-\rho^{-\alpha}K_{1}}\right)=K_{0}\rho^{1-\alpha-\lceil\alpha^{-1}\rceil\alpha}\left(\frac{K_{1}^{\lceil\alpha^{-1}\rceil}}{1-\rho^{-\alpha}K_{1}}\right)\\
 & \leq K_{0}\rho^{-\alpha}\left(\frac{K_{1}^{\lceil\alpha^{-1}\rceil}}{1-\rho^{-\alpha}K_{1}}\right)\to0\quad(\rho\to\infty),
\end{align*}
where $K_{0}\coloneqq\left\Vert W\sqrt{D_{11}^{-1}}\right\Vert ^{2}$
and $K_{1}\coloneqq\left\Vert \sqrt{D_{11}^{-1}}\tilde{C}_{11}\sqrt{D_{11}^{-1}}\right\Vert $.
Thus, Condition \ref{cond-frac-posdef} is satisfied, for the respective
choices %
\[
P_{0}=\left(\begin{array}{cc}
1 & 0\\
0 & 0
\end{array}\right),\quad F_{0}=\left(\begin{array}{cc}
0 & 0\\
0 & 1
\end{array}\right),\text{ and }Q_{0}=0.\tag*{\qedhere}
\]
\end{proof}
\begin{rem}
In \cite{waurick2013_fract_elast} a well-posedness result for the
fractional Kelvin-Voigt model considered was shown under the condition
that $\alpha\geqq1/2$.
\end{rem}

\subsection{Initial Boundary Value Problems\label{sub:Initial-Boundary-Value}}

We now consider initial value problems for equations of the form (\ref{eq:simple-case}).
Since fractional derivatives of order $\beta\in]0,1[$ are non-local,
i.e., we have memory effects, prescribing a pre-history appears to
be more appropriate. This amounts to imposing $H_{\rho,0}\left(\mathbb{R},H\right)$-data
prior to the initial time $0$, i.e., ``the pre-history'', and is
clearly covered by the above.

However, if initial conditions are deemed appropriate, then there
appear to be two possible ways of implementing them. The first option
would be to consider with $F\in H_{\rho,0}\left(\mathbb{R},H\right)$,
$W_{\alpha}\in M_{\alpha}\left[H\right],$
\begin{equation}
\partial_{0}^{\beta}M_{\alpha}U+M_{1}U+AU=F+\delta\otimes W_{\alpha}\label{ibvp-1}
\end{equation}
as a straight-forward generalization of the case $\beta=1,$ see \cite[Theorem 6.2.9]{PDE_DeGruyter}.
This translates to
\[
\partial_{0}\partial_{0}^{-\alpha}M_{\alpha}U+M_{1}U+AU=F+\delta\otimes W_{\alpha},
\]
which following the rationale in \cite[Proof of Theorem 6.2.9]{PDE_DeGruyter}
would be in point-wise terms an implementation of the initial jump
condition 
\[
\left(\partial_{0}^{-\alpha}M_{\alpha}U-\chi_{_{\mathbb{R}_{>0}}}\otimes W_{\alpha}\right)\left(0+\right)-\left(\partial_{0}^{-\alpha}M_{\alpha}U-\chi_{_{\mathbb{R}_{>0}}}\otimes W_{\alpha}\right)\left(0-\right)=0,
\]
where the limits are taken in $H_{-1}\coloneqq H_{-1}(A+1)$, the
completion of $H$ with respect to the norm $|(A+1)^{-1}\cdot|_{H}$,
see also Section \ref{sub:Sobolev-chains}. If $F=0$ on $\mathbb{R}_{<0}$
then causality yields 
\[
\left(\partial_{0}^{-\alpha}M_{\alpha}U-\chi_{_{\mathbb{R}_{>0}}}\otimes W_{\alpha}\right)\left(0-\right)=0\mbox{ in }H_{-1}
\]
and so, in this case an initial condition of the form
\[
\left(\partial_{0}^{-\alpha}M_{\alpha}U\right)\left(0+\right)=W_{\alpha}\mbox{ in }H_{-1}
\]
results. 

Alternatively, let
\[
V_{\alpha}\in M_{\alpha}\left[H\right]
\]
and $G\in H_{\rho,0}\left(\mathbb{R},H\right)$ be given and consider
\begin{align}
\partial_{0}^{\beta}\left(M_{\alpha}V-\chi_{_{\mathbb{R}_{>0}}}\otimes V_{\alpha}\right)+M_{1}V+AV & =G.\label{ibvp-2}
\end{align}
For the case $\alpha=0,$ $\beta=1,$ $G=F$ and $V_{0}=W_{0}$ this
would be a re-formulation of (\ref{ibvp-1}). This justifies taking
(\ref{ibvp-2}) as an appropriate alternative generalization. Problem
(\ref{ibvp-2}) is equivalent to the evolutionary problem
\begin{align*}
\partial_{0}^{\beta}M_{\alpha}V+M_{1}V+AV & =G+\partial_{0}^{\beta}\chi_{_{\mathbb{R}_{>0}}}\otimes V_{\alpha},\\
 & =G+\partial_{0}^{-\alpha}\delta\otimes V_{\alpha}.
\end{align*}
Both approaches are clearly closely related. Indeed, for $G=\partial_{0}^{-\alpha}F$
and $V_{\alpha}=W_{\alpha}$ we see by comparison that
\[
V=\partial_{0}^{-\alpha}U.
\]


\begin{thebibliography}{References}
\bibitem{pre05172699} A. Bertram. \newblock {\em Elasticity and
Plasticity of Large Deformations: An Introduction.} \newblock {Berlin:
Springer. xiv, 326~p.}, 2005. 

\bibitem{Dunford_Schwartz1988} N.~Dunford and J.~T. Schwartz. \newblock {\em {Linear
operators. Part II: Spectral theory, self adjoint operators in Hilbert
space. With the assistance of William G. Bade and Robert G. Bartle.
Repr. of the orig., publ. 1963 by John Wiley \& Sons Ltd., Paperback
ed.}} \newblock {Wiley Classics Library. New York etc.: John Wiley
\& Sons Ltd./Interscience Publishers, Inc. ix, 859-1923 }, 1988. 

\bibitem{McGhee2011} D.~F. McGhee and R.~Picard. \newblock {A
Class of Evolutionary Operators and its Applications to Electroseismic
Waves in Anisotropic, Inhomogeneous Media.} \newblock {\em Operators
and Matrices}, 5(4):665--678, 2011. 

\bibitem{0789.26002} K.~S.~Miller and B.~Ross. \newblock {\em
{An introduction to the fractional calculus and fractional differential
equations.}} \newblock {New York: John Wiley \&amp; Sons, Inc..
xiii, 366 p.}, 1993.

\bibitem{metzler1999anomalous} R.~Metzler, E.~Barkai, and J.~Klafter.
\newblock Anomalous diffusion and relaxation close to thermal equilibrium:
A fractional Fokker-Planck equation approach. \newblock {\em Physical
review letters}, 82(18):3563--3567, 1999.

\bibitem{doi:10.1142/S0218396X03002024} B. Nolte, S. Kempfle, and
I. Schäfer. \newblock Does a real material behave fractionally? Applications
of fractional differential operators to the damped structure borne
sound in viscoelastic solids. \newblock {\em Journal of Computational
Acoustics}, 11(03):451--489, 2003.

\bibitem{Pi2009-1} R.~Picard. \newblock {A Structural Observation
for Linear Material Laws in Classical Mathematical Physics.} \newblock
{\em {Math. Methods Appl. Sci.}}, 32(14):1768--1803, 2009. 

\bibitem{pre05760017} R.~Picard. \newblock {On a Comprehensive
Class of Linear Material Laws in Classical Mathematical Physics.}
\newblock {\em Discrete Contin. Dyn. Syst., Ser. S}, 3(2):339--349,
2010. 

\bibitem{PicardPamm}R.~Picard. \newblock { On Evolutionary Equations
with Fractional Material Laws. } \newblock { To appear in PAMM
2013.}

\bibitem{PDE_DeGruyter} R.~Picard and D.~F. McGhee. \newblock
{\em Partial Differential Equations: A Unified Hilbert Space Approach},
Volume~55 of {\em {De Gruyter Expositions in Mathematics}}.
\newblock {De Gruyter. Berlin, New York. 518 p.}, 2011.

\bibitem{Picard2010} R.~Picard. \newblock {\em {A Class of Evolutionary
Problems with an Application to Acoustic Waves with Impedance Type
Boundary Conditions}}, in: Operator Theory, Advances and Applications,
Volume\textasciitilde{}221, pages 533--548. \newblock Birkh{ä}user,
2012.

\bibitem{OP2012} R.~Picard. \newblock A class of evolutionary problems
with an application to acoustic waves with impedance type boundary
conditions. \newblock In {\em Spectral Theory, Mathematical System
Theory, Evolution Equations, Differential and Difference Equations},
volume 221 of {\em Operator Theory: Advances and Applications},
pages 533--548. Springer Basel, 2012.

\bibitem{Picard1989}R.~H.~Picard. \newblock {\em {Hilbert space
approach to some classical transforms}}. \newblock Pitman research
notes in mathematics series. Longman Scientific \& Technical, 1989.

\bibitem{citeulike:1428674} I.~Podlubny. \newblock {\em {Fractional
Differential Equations: An Introduction to Fractional Derivatives,
Fractional Differential Equations, to Methods of Their Solution and
Some of Their Applications. (Mathematics in Science and Engineering,
volume 198)}}. \newblock Academic Press, 340 p., 1999.

\bibitem{Thomas1997} E.~G.~F.~Thomas. \newblock{{Vector-valued
integration with applications to the operator-valued $H^{\infty}$
space.}} \newblock{\em IMA Journal of Mathematical Control and Information},
14(2): 109--136, 1997.

\bibitem{Trostorff2012_nonlin_bd} S.~Trostorff. \newblock {Autonomous
Evolutionary Inclusions with Applications to Problems with Nonlinear
Boundary Conditions.} \newblock {\em Int. J. Pure Appl. Math.},
2013, to appear, \newblock arXiv:1212.2061.

\bibitem{waurick2013_fract_elast} M.~Waurick. \newblock{Homogenization
in fractional elasticity.}\newblock{ Submitted. Technical Report.
} 2013, \newblock arXiv:1302.1731\end{thebibliography}
\end{document}